\newcommand{\grad}{\operatorname{grad}}
\newcommand{\supp}{\operatorname{supp}} 
\newcommand{\conv}{\operatorname{conv}} 
\newcommand{\Hb}{\mathcal{H}} 
\newcommand{\R}{\mathbb{R}} 
\newcommand{\RBV}{\mathcal{R}} 
\newcommand{\Radon}{\mathscr{R}} 
\newcommand{\Rnorm}{\mathrm{R}} 
\newcommand{\di}{\mathrm{d}} 
\newcommand{\Sf}{\mathbb{S}} 
\newcommand{\Sw}{\mathcal{S}} 
\newcommand{\F}{\mathcal{F}} 
\newcommand{\Ft}{\mathscr{F}} 
\newcommand{\M}{\mathcal{M}} 
\newcommand{\D}{\mathcal{D}} 
\newcommand{\e}{\mathrm{e}} %
\newcommand{\repu}{\mathrm{RePU}} 
\newcommand{\relu}{\mathrm{ReLU}} 
\newcommand{\PP}{\mathbb{P}} 
\newcommand{\FF}{\mathbb{F}} 
\newcommand{\EE}{\mathbb{E}} 
\newcommand{\ext}{\mathrm{ext}}
\newcommand{\K}{\mathcal{K}} 
\newcommand{\N}{\mathbb{N}} 
\newcommand{\diam}{\mathrm{diam}} 
\newcommand{\B}{\mathbb{B}} 
\newcommand{\Poly}{\mathscr{P}} 
\DeclareMathOperator{\esssup}{ess\,sup} 
\newcommand{\lr}[1]{\left( #1 \right)}
\newcommand{\abs}[1]{\left\lvert #1 \right\rvert}
\newcommand{\norm}[2]{\| #1\|_{#2}}
\title{Function and derivative approximation by shallow neural networks\thanks{Submitted to the editors DATE.
\funding{This work is supported by National Key Research and Development Programs of China (No. 2023YFA1009103), NSFC (No. 11925104) and Science and Technology Commission of Shanghai Municipality (No. 23JC1400501)}}}
\author{Yuanyuan Li\thanks{School of Mathematics Sciences, Fudan University, Shanghai, China
  (\email{liyuanyuan20@fudan.edu.cn}, \email{slu@fudan.edu.cn}). Corresponding author: S. Lu.}
\and Shuai Lu\footnotemark[2]
}
\begin{document}

\maketitle
\begin{abstract}
We investigate a Tikhonov regularization scheme specifically tailored for shallow neural networks within the context of solving a classic problem: approximating an unknown function and its derivatives in a unit cubic domain based on noisy measurements.
The proposed Tikhonov regularization scheme incorporates a penalty term that takes three distinct yet intricately related network (semi)norms: the extended Barron norm, the variation norm, and the Radon-BV seminorm. These choices of the penalty term are contingent upon the specific architecture of the neural network being utilized. We establish the connection between various network norms and particularly trace the dependence of the dimensionality index, aiming to deepen our understanding of how these norms interplay with each other.
We revisit the universality of function approximation through various norms, establish rigorous error-bound analysis for the Tikhonov regularization scheme, and explicitly elucidate the dependency of the dimensionality index, providing a clearer understanding of how the dimensionality affects the approximation performance and how one designs a neural network with diverse approximating tasks.

\end{abstract}
\begin{keywords}
Shallow neural networks, Function and derivative approximation, Error bound analysis, Tikhonov regularization
\end{keywords}
\begin{MSCcodes}
65D15, 65F22, 65J20
\end{MSCcodes}


\section{Introduction}\label{se_intro}

This paper delves into the classical problem of approximating an unknown smooth function and its derivatives within a unit cubic domain, specifically $\Omega = (0,1)^d$, in the context of noisy measurement data. Distinguishing from existing research, our analysis focuses on the scenario where the approximating function is a two-layer neural network, commonly referred to as a shallow neural network. This neural network is articulated as follows:
\begin{align}\label{eq:shallow}
    f_n(x) = \sum_{i=1}^{n}a_i\sigma(w_i\cdot x + b_i) + a_0.
\end{align}
Within the above architecture, $w_i\in \R^d$ and $b_i\in\R$ respectively represent the weights and biases of the hidden layer. Similarly,
$a_i\in \R$ and $a_0\in \R$ serve as the weights and bias of the output layer.
The function $\sigma(\cdot)$ in
(\ref{eq:shallow}) is known as the activation function. It plays a crucial role in introducing non-linearity into the network, enabling it to learn and approximate complex functions. Common activation functions include Rectified Linear Unit (ReLU), sigmoid, tanh and so on.
Specifically, we focus on the Rectified Power Unit ($\repu$) $\sigma_{k}(z) = \left(\max\{0,z\}\right)^k$ which is also known as the ReLU$^k$ function. It is noteworthy that when $k$ is set to $1$, $\sigma_1(z)$ corresponds to the standard $\relu$ function. We note that there has been extensive research exploring the functional spaces of two-layer neural networks, specifically those employing the $\relu$ activation function, as exemplified in \cite{SESS19, OSWS20, EMW22}, as well as those utilizing the $\repu$ activation function, such as in \cite{Xu20, PN21, abdeljawad_integral_2022, LLMP23}.

Approximating a function based on its measurements has been a long-standing task in numerical analysis and approximation theory, playing an important role in supervised learning, as outlined in \cite{Aronszajn50,  Wahba90, DeVore1998}.
Universal approximation theory (UAT) has been firmly established such that a two-layer neural network, equipped with the right parameters, can approximate any continuous function with arbitrary precision \cite{Cybenko89, Hornik91, Barron93, Yarotsky2017, GKNV2022}. Systematic studies on the universality properties of two-layer neural networks, specifically those employing the ReLU activation function, have been conducted in \cite{EMW22, SX22}, considering various {\it a priori} knowledge about the unknown target function.
Regarding UAT for the derivatives, a thorough and systematic investigation has been conducted in \cite{SX22, SX22sharp, SX23}, where the approximation rate about the number of neurons has been comprehensively established.
Some attempts have been undertaken to approximate unknown functions using deep neural networks with $\relu$ activation functions. In particular, we cite \cite{Ali_Nouy_21, MM22} for using $\relu$ and $\repu$ neural networks to approximate functions related to Besov spaces. Additionally, \cite{zhou_universality_2020} demonstrates the universality of deep convolutional neural networks when employing the ReLU activation function. Furthermore, \cite{CPF} proposes an approach for approximating classification functions, while \cite{LSYZ21} analyzes the optimal approximation of smooth functions, considering both the width and depth of the networks simultaneously. 

However, it is noteworthy that the authors above have not considered the presence of noisy data in their investigations.
Including noisy data in such approximations is crucial for real-world applications, where measurements are often imperfect and contaminated with noise.
Addressing this challenge, which involves developing robust approximation methods that can effectively handle noisy data, remains an important direction for future research \cite{JMFU17, Baoetal20, DSY_21}.  These noise models typically show as
\begin{align}\label{eq:noisymeasurement}
  \|f-f^{\delta}\|_{L^2(\Omega)} \leq \delta
\end{align}
where $f$ is the unknown exact solution, $f^{\delta}$ is the noisy measurement and small $\delta>0$ represents the noise level.
The derivative approximation of an unknown function poses inherent challenges due to its ill-posed nature \cite{EHN96}. Noise in the measurement, as described in~(\ref{eq:noisymeasurement}), can lead to unstable reconstructions unless regularization schemes are implemented \cite{EHN96, LP13} which is a viable approach that can be easily adapted to this problem \cite{Bishop95, HS01, LP06}. Among various regularization schemes, Tikhonov regularization stands as a standard technique.
It aims to find $f_\lambda^{\delta}\in X_n$ as a minimizer of the following functional:
  \begin{equation}\label{fig:classicTikhonov-min}
  f_\lambda^{\delta} = \arg\min_{g\in X_n} \|g -f^{\delta}\|_{L^2(\Omega)}^2 + \lambda \|g-f_*\|_{H^{k}(\Omega)} ^2,
  \end{equation}
where $\lambda$ serves as a regularization parameter, $X_n$ represents a generic solution set, and $f_*$ denotes an initial guess. Error bounds of the form $\|f-f_\lambda^{\delta} \|_{H^m(\Omega)}$, with $0\leq m \le k$ have been established in relation to the noise level $\delta$ for various ansatzes of $f_n$ \cite{HS01, BN03, LP06}. In particular, the pioneering work \cite{BN03} considers an approach (\ref{fig:classicTikhonov-min}) when the minimizer $f_n$ is represented as a two-layer neural network and error bound analysis is rigorously carried out. Notably, the penalty term $\|f-f_*\|_{H^{k}(\Omega)} ^2$ in (\ref{fig:classicTikhonov-min}) remains expressed as a Sobolev norm. On the other hand, in our recent joint work \cite{LLMP23}, along with our co-authors,
we propose a novel regularization scheme that incorporates the information of network parameters into the regularization penalty term, which is more generic in computation than that in \eqref{fig:classicTikhonov-min}. This scheme is given by
\begin{align}\label{eq:min_problem_general}
& f_\lambda^{\delta} = \arg\min_{g\in F_n}
  J_{\lambda}(g)
\end{align}
where we again use $f_\lambda^{\delta} $ to denote the minimizer. The above functional $J_{\lambda}(g)$ is defined by
\begin{align}\label{eq:min_problem_exBarron}
  J_{\lambda}(g) := \left\|g -f^{\delta}\right\|_{L^2(\Omega)}^2 +
    \lambda \lr{\frac 1 n \sum_{i=1}^{n}
  \abs{a_{i}}\lr{\norm{w_{i}}{1} + \abs{b_{i}}}^{k}}^{2} ,\qquad g\in  F_{n},
\end{align}
and $F_{n}$ denotes the set of all two-layer neural networks with~$n$ neurons which will be further defined in the subsequent text. The regularization scheme (\ref{eq:min_problem_exBarron}) leverages the neural network structure, incorporating the network parameters into the regularization penalty term.

In this manuscript, we aim to significantly broaden the scope of the research and introduce a more comprehensive regularization scheme. Specifically, we consider a regularization functional that is formulated as follows:
\begin{equation}\label{eq:min_problem}
  J_{\lambda}(g) := \left\|g -f^{\delta}\right\|_{L^2(\Omega)}^2 +
    \lambda \mathcal{P}(g).
\end{equation}
Here $\mathcal{P}(g)$ represents the penalty term, which adopts various forms depending on the neural network architecture. Furthermore, we delve into the impact of the dimensionality index $d$, particularly focusing on its influence on the approximation performance.
Our objective is to elucidate the inherent relationship between different network norms and the dependency of the dimensionality index on various error bounds. Our analysis reveals that although the universal approximation property could be dimension-independent for function approximation in certain spaces, the dimensionality index inevitably influences the derivative approximation. The manuscript is structured as follows. In Section \ref{se_normoverview}, we overview three distinct network norms: the (extended) Barron norm, the variation norm, and the Radon-BV seminorm. Section \ref{se_differentspace} explores the connections among these norms. Subsequently, in Sections \ref{se_revisitUA} and \ref{se_error}, we revisit the convergence analysis for function approximation and derive error bounds of the function and derivatives approximation for the regularization scheme (\ref{eq:min_problem}), with a particular emphasis on the dependency of the dimensionality of the target function's domain. Finally, a conclusion section ends the paper.

\section{Overview of the shallow neural network norms}\label{se_normoverview}

In this section, we provide a brief overview of three most popular (semi)norms for shallow neural networks (\ref{eq:shallow}), namely the (extended) Barron norm, the variation norm, and the Radon-BV seminorm. These norms can be incorporated as penalty terms during neural network training, which we will explore in Section \ref{se_revisitUA} when simultaneously approximating a function and its derivatives.
To streamline our subsequent analysis, we recall the unit cubic domain $\Omega = (0,1)^d$ with a finite dimensionality index $d$. Without loss of generality, we assume that $d$ is an integer with $d\geq 2$. It is worth noting that, while our current work focuses on a unit cubic domain $\Omega$, extending the analysis to the entire space is a non-trivial task that will be delved into more deeply in future research.

\subsection{(Extended) Barron norm}\label{subse_Barronnormoverview}
The first norm originates from the properties of neural network coefficients and has been well discussed in \cite{EMW22} for the $\relu$ function. Extension to the RePU function can be found in \cite{LLMP23}.

If the activation function is chosen as the ReLU function $\sigma_1$,
a straightforward value that we can use to evaluate the shallow neural network  (\ref{eq:shallow}) is the weighted value of its coefficients. For instance, we can define this value as follows:
\begin{align}\label{eq:exBarronfinite}
\mathcal{B}_p(f) := \left(\frac{1}{n} \sum_{i=1}^n|a_i|^p (\|w_i\| + |b_i|)^p + |a_0|^p \right)^{1/p}.
\end{align}
When the number of the neurons increases as $n\rightarrow \infty$,
by assuming the bias $a_0=0$, one can focus on a continuous version of (\ref{eq:shallow}) where the function $f(x)$ admits
\begin{equation}\label{eq:representation_Barron_spaces}
    f(x) = \int_{\R\times \R^d \times \R}a\sigma_1(w\cdot x + b)\rho(\di a, \di w, \di b),\quad x\in \Omega,
\end{equation}
and the $\rho$ represents a probability distribution for the network coefficients $(a,w,b)\in \R\times \R^d \times \R$.
Referring to \cite{EMW22}, we can express the Barron norm in the following form
$$
\|f\|_{B_{p}}=\inf_{\rho}\left( \EE_{\rho} \left[ |a|^{p}\left(\|w\|_1+|b|\right)^{p} \right] \right)^{\frac{1}{p}},\quad 1\le p< \infty,
$$
where the infimum is taken over all $\rho$ satisfying \eqref{eq:representation_Barron_spaces}.
Such a formulation can be viewed as a continuous analogy of (\ref{eq:exBarronfinite}).
When $p=\infty$, the Barron norm is defined by
$$
\|f\|_{B_{\infty}} = \inf_{\rho} \max_{(a,w,b)\in \supp(\rho)} |a|\left(\|w\|_1+|b|\right).
$$
Following the arguments in \cite{EMW22}, one can define the Barron space $B_{p}$ as the set of continuous functions that can be represented by \eqref{eq:representation_Barron_spaces} with finite Barron norm.
It has been proven in \cite{EMW22} that, for any $1\le p\le \infty$, $B_p=B_{\infty}$ and $\|f\|_{B_{p}}=\|f\|_{B_{\infty}}$.
In particular, \cite{EMW22} also demonstrates that the Barron space is closely related to Reproducing Kernel Hilbert Spaces (RKHS).

When the activation function index is an integer $k\geq 1$, one can extend the Barron norm. The following definition is proposed in \cite{LLMP23} to accommodate this extension:

\begin{definition}[Extended Barron spaces \cite{LLMP23}]\label{def:extendedBarron}
    Let a function $f:\Omega\to \R$ admit
    \begin{equation}\label{eq:representation_extended_Barron_spaces}
        f(x) = \int_{\R\times \R^{d}\times \R} a\sigma_k(w\cdot x+b)\rho(\di a,\di w, \di b),\quad x\in\Omega,
    \end{equation}
    where the activation function is $\sigma_k(z) = \left(\max\left\{0,z\right\}\right)^k$ and $\rho$ is a probability distribution.
    Then, for a multi-index $\alpha$ with $|\alpha|:=\sum_{i=1}^{d}|\alpha_i|\le k$, we derive
    $$
    \partial^{\alpha} f(x)=\int_{\R\times \R^{d}\times \R}a\sigma_{k-|\alpha|}(w\cdot x+ b)\rho_{\alpha}(\di a,\di w,\di b),
    $$
    where $\rho_{\alpha}$ is the pushforward measure induced by the continuous map $(a,w,b)\mapsto \left(\frac{k!}{(k-|\alpha|)!}aw^{\alpha}, w, b\right)$ with $w^{\alpha}=\prod_{i=1}^{d}w_i^{\alpha_i}$.
    
    Define $
    \|f\|_{B_{p,\rho}^{k}}:=\left(\EE_{\rho}\left[|a|^{p}\left(\|w\|_1+|b|\right)^{kp}\right]\right)^{\frac{1}{p}},
    $
    for $1\le p<\infty$ and, for $p=\infty$
    $$
    \|f\|_{B_{\infty,\rho}^{k}}:=\max_{(a,w,b)\in \supp(\rho)}|a|\left(\|w\|_1+|b|\right)^{k}.
    $$
    The extended Barron norm is defined by
    $
    \|f\|_{B_{p}^{k,m}}:=\inf_{\rho}\left(\sum_{|\alpha|\le m}\|\partial^{\alpha}f\|_{B_{p,\rho_{\alpha}}^{k-|\alpha|}}^{p}\right)^{\frac{1}{p}}
    $
    for $1\le p<\infty$ and, 
    $
    \|f\|_{B_{\infty}^{k,m}}:=\inf_{\rho}\max_{|a|\le m}\|\partial^{\alpha}f\|_{B_{\infty,\rho_{\alpha}}^{k-|\alpha|}}$ for $p=\infty$, 
    where the infimum is taken over all $\rho$ satisfying \eqref{eq:representation_extended_Barron_spaces} and $m\le k$.
    Extended Barron spaces $B_{p}^{k,m}$ are the set of functions that can be represented by \eqref{eq:representation_extended_Barron_spaces} with finite extended Barron norm.
\end{definition}

It is proven in \cite{LLMP23} that $B_1^{k,0}$ is a normed space and for all $0\leq m \leq k$ and $1\leq p \leq \infty$ there holds $B_p^{k,m} = B_1^{k,0}$ as sets.
For brevity, in the following contents, we will use $B_1^{k}$ to denote $B_{1}^{k,0}$. Furthermore, the (extended) Barron norm can be defined on the entire space or restricted to any bounded domain. Specifically, we employ the notation $B_1^{k}(\Omega)$ to accommodate our specific setting, where $\Omega = (0,1)^d$.

\subsection{Variation norm}\label{subse_Variationnormoverview}

The second norm originates from the convex hull of the dictionary sets. We cite the following definition and refer directly to \cite{SX23} for further details.
\begin{definition}[Variation spaces of $\repu$ dictionary \cite{SX23}]\label{def:variation_space}
    For a bounded domain $\Omega\subset \R^d$, denote the dictionary as
    $$
    \PP_k=\left\{ \sigma_k(w\cdot x+b): w\in \Sf^{d-1},b\in[c_1,c_2] \right\}\subset L^{2}(\Omega),
    $$
    where $c_1 < \inf\left\{w\cdot x: x\in\Omega, w\in \Sf^{d-1}\right\}<\sup \left\{w\cdot x : x\in\Omega, w\in\Sf^{d-1}\right\}<c_2$
    and $\Sf^{d-1}:=\left\{w\in \R^{d}\mid \|w\|_2=1\right\}$.
    The variation norm of $f$ is defined by
    $$
    \|f\|_{\PP_k}:=\inf\left\{c>0: \frac{f}{c}\in\overline{\conv(\pm \PP_k)}\right\},
    $$
    where
    $$
    \overline{\conv(\pm \PP_k)} = \overline{\left\{\sum_{j=1}^{n}a_jh_j: n\in\N, h_j\in \PP_k, \sum_{i=1}^{n}|a_i|\le 1\right\}}.
    $$
    The variation space $\K(\PP_k)$ is
    $
    \K(\PP_k):=\left\{ f\in L^2(\Omega):\|f\|_{\PP_k}<\infty \right\}.
    $
\end{definition}

As it has been discussed in \cite{SX23} if the dictionary $\PP_k$ is replaced by
$$\FF_s=\left\{(1+|\omega|)^{-s}\e^{2\pi i \omega\cdot x}: \omega\in\R^{d} \right\}\subset L^2(\Omega),$$
the variation norm is equivalent to the Barron spectral norm, as initially proposed in \cite{Barron93} and subsequently explored in \cite{Xu20} and \cite{SX23}.

\subsection{{Radon-BV seminorm}}\label{subse_RadonBVoverview}
The third one focuses primarily on the coefficients of neural networks. In contrast to the (extended) Barron norm, this seminorm disregards all bias coefficients and solely considers the non-bias weights of \eqref{eq:shallow}, for instance, adopting a formulation such as
 $\frac{1}{2}\sum_{i=1}^{n}\left( a_i^2 + \|w_i\|_2^2 \right) $
as discussed in \cite{SESS19}.
In an alternative form, \cite{SESS19} proposes controlling the output weight under the constraint of hidden weights, such that the definition of $\mathcal{V}(\theta)$, with the coefficient set $\theta=(n,a,w,b)$,
is given by
\begin{align}\label{eq:RadonBVfinite}
\mathcal{V}(\theta):= \inf_{\theta\in\Theta; h_\theta=f ; \|w_i\|_2=1} \sum_{i=1}^{n}|a_i|,
\end{align}
where $\Theta=\left\{\theta=(n,a,w,b)\mid n\in \N, a\in \R^{n+1}, w=(w_i)_{i=1}^{n}\in \R^{d\times n} \text{ and } b\in \R^{n}\right\}$
and $h_{\theta}(x)=\sum_{i=1}^{n}a_i\sigma(w_i\cdot x + b_i)+a_0$.

Similar to the previous Subsection \ref{subse_Barronnormoverview}, when the number of neurons tends to infinity as $n\rightarrow \infty$,
\cite{SESS19} introduces a signed measure $\mu$ on $\Sf^{d-1}\times \R$ and defines an auxiliary function
$
h_{\mu,a_0}:=\int_{\Sf^{d-1}\times \R} \sigma_k (w\cdot x + b) \di \mu(w,b) + a_0,
$
with a bias $a_0\in\R$. Subsequently, the finite summation (\ref{eq:RadonBVfinite}) can be extended to
an infinite form $\bar{\mathcal{V}}(f)$ as
$$
\bar{\mathcal{V}}(f) := \inf_{\mu,a_0;h_{\mu,a_0} = f} \|\mu\|_{\M(\Sf^{d-1}\times \R)},
$$
where $\|\cdot\|_{\M(\Sf^{d-1}\times \R)}$ is the total variation in the measure sense. Observing that $\bar{\mathcal{V}}(f)$ eliminates the influence of the bias term from the integral, the discussion in \cite{OSWS20} further removes an affine term and defines the following:
$$\bar{\mathcal{V}}_1(f) := \min_{f=h_{\mu,v,a_0}; \mu\in\M(\Sf^{d-1}\times \R);  v\in\R^d; a_0\in\R} \|\mu\|_{\M(\Sf^{d-1}\times\R)},$$
where
$
h_{\mu,v,a_0} :=\int_{\Sf^{d-1}\times \R} \left( \sigma_k(w\cdot x - b)-\sigma_k(-b) \right)\di\mu(w,b)+v\cdot x +a_0.
$
If the activation function is fixed as the ReLU, \cite{OSWS20} highlights significance of the Radon transform and introduces a specific $\Rnorm$-norm for any Lipschitz continuous function $f:\R^d\to \R$ by
$$
\|f\|_{\Rnorm}:=\sup\left\{-\gamma_d\left< f,(-\Delta)^{\frac{d+1}{2}}\Radon^*\varphi \right>\mid \varphi\in\Sw(\Sf^{d-1}\times \R),\varphi \text{ is even},\|\varphi\|_{\infty}\le 1\right\}
$$
where $\gamma_{d} = \frac{1}{2(2\pi)^{d-1}}$. Here
$\Radon^{*}$ is the dual of the Radon transform defined by
$$
\Radon^{*} \varphi (x) = \int_{\Sf^{d-1}} \varphi(\alpha, \alpha \cdot x) \di \alpha,
$$
and
$\Ft((-\Delta)^{\frac{s}{2}}g)(\eta) = |\eta|^s\Ft(g)(\eta)$, where $\Ft$ is the Fourier transform.
If $f$ is not Lipschitz continuous, one can set $\|f\|_{\Rnorm}=+\infty$.
As demonstrated in \cite[Theorem 1]{OSWS20}, $\bar{\mathcal{V}}_1(f) = \|f\|_{\Rnorm}$ for all functions $f$.
Additional properties of the $\Rnorm$-norm can be found in \cite{OSWS20}, and we omit the details here.

If the activation function is chosen as $\repu$, one can further eliminate the influence of low-order polynomials, and \cite{PN21} defines a Radon-BV space by introducing
\begin{definition}[Radon-BV spaces on $\R^d$ \cite{PN21}]\label{def:RadonBV}
    Define $\RBV_{k+1}:=c_d\partial_{t}^{k+1}\Lambda^{d-1}\Radon$,
    with $c_d:=\frac{1}{2(2\pi)^{d-1}}$.
    $\Lambda^{d}$ is a ramp filter such that, for any given function $\Phi(\alpha, t)$, it is defined by
    $$
    \Lambda^{d}\left\{\Phi\right\}(\alpha,t):=
    \begin{cases}
        \partial_{t}^{d}\Phi(\alpha, t),& d\text{ is even},\\
        \Hb_t\partial_{t}^{d}\Phi(\alpha, t),& d\text{ is odd}.
    \end{cases}
    $$
    Here $\Hb_t$ is the Hilbert transform in the variable $t$, $\partial_t$ is the partial derivative with respect to $t$,
    and $\Radon$ is the Radon transform.
    The Radon-BV space is defined by
    $$
    \F_{k+1}:=\left\{f\in L^{\infty, k}(\R^{d}) : \RBV_{k+1} f\in \M(\Sf^{d-1}\times \R)\right\},
    $$
    where $L^{\infty,k}(\R^d)$ consists all functions $f$ with finite $\esssup_{x\in\R^d}|f(x)|(1+\|x\|_2)^{-k}$, and $\M(\Sf^{d-1}\times \R)$ is the Banach space of finite Radon measures on $\Sf^{d-1}\times \R$.
\end{definition}

In principle, there is no explicit requirement on the representation of $f\in\F_{k+1}$ as it is implicitly determined by the operator $\RBV_{k+1}$.
\cite{PN21} demonstrates  that $\RBV_{k+1}$ can sparsify the $\repu$ neurons as
$$
\RBV_{k+1}\left(\frac{\sigma_{k}\left(w \cdot -b\right)}{k!}\right) = \frac{\delta_{\Sf^{d-1}\times \R}\left(\cdot - z\right)+(-1)^{k+1}\delta_{\Sf^{d-1}\times \R}\left(\cdot + z\right)}{2},
$$
where $w\in\Sf^{d-1}$, $b\in\R$, $z=(w,b)$,
and $\delta_{\Sf^{d-1}\times \R}$ is the Dirac impulse.
Furthermore, the null space of $\RBV_{k+1}$ in $\F_{k+1}$ comprises all polynomials with a degree strictly less than $k+1$.

The following proposition illustrates how the operator $\RBV_{k+1}$ transforms an infinitely wide neural network into a measure defined on the Radon domain.
\begin{proposition}\label{ppt:RBV}
    Let $f$ be a function satisfying the following representation
    \begin{equation}\label{eq:representation_RadonBV}
        f(x) = \int_{\Sf^{d-1}\times \R} \sigma_{k}(w\cdot x-b)\di\mu(w,b),\quad \mu\in \M(\Sf^{d-1}\times \R).
    \end{equation}
    Then
    $$
    \frac{1}{k!}\RBV_{k+1}f = \begin{cases}
        \mu^{+}, & k\text{ is odd},\\
        \mu^{-}, & k\text{ is even},
    \end{cases}
    $$
    where $\mu^{+}(w,b) :=  \frac{1}{2} \left(\mu(w,b) + \mu(-w,-b)\right)$
    and $\mu^{-}(w,b) :=  \frac{1}{2} \left(\mu(w,b) - \mu(-w,-b)\right)$.
\end{proposition}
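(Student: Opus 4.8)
The plan is to reduce the statement to the single-neuron sparsification identity recorded just above the proposition, namely
$$\RBV_{k+1}\!\left(\frac{\sigma_k(w\cdot x - b)}{k!}\right) = \frac{\delta_{\Sf^{d-1}\times\R}(\cdot - z) + (-1)^{k+1}\delta_{\Sf^{d-1}\times\R}(\cdot + z)}{2}, \qquad z=(w,b),$$
and to transfer it to the integral representation \eqref{eq:representation_RadonBV} using the linearity of $\RBV_{k+1}$. Concretely, I would interchange the operator $\RBV_{k+1}$ with the $\mu$-integral, apply the identity neuron-by-neuron, and reassemble the result. Since $\RBV_{k+1}f$ and the candidate on the right are both finite Radon measures, it is enough to check equality after pairing against an arbitrary smooth test function $\psi\in\Sw(\Sf^{d-1}\times\R)$; this also fixes the meaning of the density notation $\mu(-w,-b)$ as the pushforward of $\mu$ under the antipodal map.

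First I would pass the operator to its formal adjoint. Because $\RBV_{k+1}=c_d\partial_t^{k+1}\Lambda^{d-1}\Radon$ is assembled from the Radon transform, a ramp filter, and $t$-derivatives, it possesses an adjoint $\RBV_{k+1}^{*}$ acting on test functions, so that
$$\langle \RBV_{k+1}f,\psi\rangle = \langle f,\RBV_{k+1}^{*}\psi\rangle = \int_{\R^d} f(x)\,\bigl(\RBV_{k+1}^{*}\psi\bigr)(x)\,\di x.$$
I would then substitute \eqref{eq:representation_RadonBV} for $f$ and invoke Fubini's theorem to exchange the $x$-integral with the $\mu$-integral, after which the inner integral is precisely $\langle \RBV_{k+1}\sigma_k(w\cdot - b),\psi\rangle$.

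The sparsification identity now evaluates each inner pairing, and integrating against $\mu$ yields
$$\langle \RBV_{k+1}f,\psi\rangle = \frac{k!}{2}\int_{\Sf^{d-1}\times\R}\bigl(\psi(w,b) + (-1)^{k+1}\psi(-w,-b)\bigr)\,\di\mu(w,b).$$
Recognizing the first summand as $\tfrac{k!}{2}\langle\mu,\psi\rangle$ and the second as $\tfrac{k!}{2}(-1)^{k+1}\langle\tilde\mu,\psi\rangle$, where $\tilde\mu$ denotes the reflection $z\mapsto -z$ of $\mu$, gives $\frac{1}{k!}\RBV_{k+1}f=\tfrac12\bigl(\mu+(-1)^{k+1}\tilde\mu\bigr)$ as measures. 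Splitting on parity, $(-1)^{k+1}=1$ for odd $k$ and $(-1)^{k+1}=-1$ for even $k$, recovers $\mu^{+}$ and $\mu^{-}$ respectively.

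The step I expect to be the main obstacle is the rigorous justification of the interchange, not the algebra. Each neuron $\sigma_k(w\cdot x-b)$ grows like $\|x\|_2^{k}$ and thus lies only in $L^{\infty,k}(\R^d)$, while $\RBV_{k+1}$ is a genuinely distributional operator (Radon transform together with derivatives and, for odd $d$, a Hilbert transform). I would therefore need to verify that $\RBV_{k+1}^{*}\psi$ decays fast enough to be paired with this polynomial growth and to furnish a $\mu$-integrable dominating function uniformly in $(w,b)$, so that both the distributional adjoint relation and Fubini's theorem are legitimate; the finiteness of $\|\mu\|_{\M(\Sf^{d-1}\times\R)}$ then closes the outer integrability. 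This integrability bookkeeping across the operator is where the real effort lies.
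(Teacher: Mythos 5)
Your proposal follows essentially the same route as the paper's proof: pair $\RBV_{k+1}f$ with a test function, pass to the adjoint $\RBV_{k+1}^{*}$, exchange the pairing with the $\mu$-integral, apply the single-neuron sparsification identity, and split on the parity of $(-1)^{k+1}$ to recover $\mu^{+}$ or $\mu^{-}$. The only difference is that you flag the integrability justification for the interchange, which the paper simply asserts without comment.
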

\begin{proof}
    Given any test function $\varphi\in\Sw(\Sf^{d-1}\times \R)$, the following equality holds
    \begin{equation*}
        \begin{split}
            \left<\frac{1}{k!}\RBV_{k+1}f,\varphi\right> &= \frac{1}{k!}\left< f, \RBV_{k+1}^{*}\varphi \right> =\frac{1}{k!}\left< \int_{\Sf^{d-1}\times \R} \sigma_{k}\left(w\cdot -b\right)\di\mu(w,b), \RBV_{k+1}^{*}\varphi \right>\\
            &=\int_{\Sf^{d-1}\times \R} \left<\frac{1}{k!}\mathcal{R}_{k+1}\sigma_{k}(w\cdot -b), \varphi\right> \di\mu(w,b)\\
            &=\left<\int_{\Sf^{d-1}\times \R} \frac{1}{2}\left(\delta_{\Sf^{d-1}\times \R}(\cdot - z)+(-1)^{k+1}\delta_{\Sf^{d-1}\times\R}(\cdot + z) \right) \di\mu(w,b),
            \varphi\right>
        \end{split}
    \end{equation*}
    which yields
    $$
    \int_{\Sf^{d-1}\times \R} \left( \frac{1}{2}\delta_{\Sf^{d-1}\times \R}(\cdot - z)+(-1)^{k+1}\frac{1}{2}\delta_{\Sf^{d-1}\times\R}(\cdot + z) \right) \di\mu(w,b)
    =\begin{cases}
        \mu^{+},& k\text{ is odd},\\
        \mu^{-},& k\text{ is even},
    \end{cases}
    $$
    and ends the proof.
\end{proof}

We observe that the Radon-BV space, as defined in Definition \ref{def:RadonBV} is established on the entire domain $\R^d$. However, for any bounded domain $\Omega$, specifically $\Omega=(0,1)^d$, \cite{PN23} offers an expanded or modified definition.
\begin{definition}[Radon-BV spaces on $\Omega$ \cite{PN23}]
    The Radon-BV space on a bounded domain $\Omega\subset \R^{d}$ is defined by
    $$
    \F_{k+1}(\Omega):=\left\{ f\in \D'(\Omega): \exists~ g\in\F_{k+1} \text{ s.t. } g|_{\Omega}=f \right\},
    $$
    where $\D'(\Omega)$ is the distributions on $\Omega$.
\end{definition}
The notation of $\D'(\Omega)$ can be found, for instance, in \cite[1.57]{Adams_book}.
Based on this definition, one can define a seminorm, called Radon-BV seminorm on $\F_{k+1}(\Omega)$, that
$$
|f|_{\F_{k+1}(\Omega)} := \inf_{g\in \F_{k+1}}\left\{\left\|\frac{1}{k!}\RBV_{k+1}g\right\|_{\M(\Sf^{d-1}\times \R)}: g|_{\Omega}=f\right\},
$$
for $f\in\F_{k+1}(\Omega)$.
It is easy to check that $\|\cdot\|_{L^2(\Omega)} + |\cdot|_{\F_{k+1}(\Omega)}$ is a norm for $\F_{k+1}(\Omega)$.
If the activation function is chosen as ReLU with $k=1$, \cite[Lemma 2]{PN23} demonstrates that,
for any $f\in \F_{2}(\Omega)$,
there exists an extension $f_{\ext}\in\F_{2}$ that satisfies the given conditions
$$
f_{\ext}(x) = \int_{\Sf^{d-1}\times \R} \sigma_1(w\cdot x -b)\di\mu(w,b) + P(x),\quad |f|_{\F_{2}(\Omega)} = \|\RBV_{2}f_{\ext}\|_{\M(\Sf^{d-1}\times \R)},
$$
where $\supp \mu\subset \Sf^{d-1}\times [-\diam(\Omega), \diam(\Omega)]$ and $P(x)$ is an affine function, with $\diam(\Omega)=\sup_{x\in \Omega}\|x\|_2$.
This result can be generalized to $f,f_{\ext}\in \F_{k+1}$ for any integer $k\geq 1$ and the RePU activation function $\sigma_k$. Specifically, by denoting
$P(x)$ as a polynomial with a degree strictly less than $k+1$, we have the following property:
\begin{equation}\label{eq:extension}
    f_{\ext}(x) = \int_{\Sf^{d-1}\times \R} \sigma_k(w\cdot x -b)\di\mu(w,b) + P(x),
\end{equation}
with
$$
|f|_{\F_{k+1}(\Omega)} = \left\|\frac{1}{k!}\RBV_{k+1} f_{\ext}\right\|_{\M(\Sf^{d-1}\times \R)},
$$
where $\supp \mu\subset \Sf^{d-1}\times [-\diam(\Omega), \diam(\Omega)]$.
Furthermore, if $k$ is odd, then $\mu$ is even; whereas if $k$ is even, then $\mu$ is odd.

\section{Relationships among different norms and spaces}\label{se_differentspace}
In this section, we will systematically explore the relationship among various network norms overviewed earlier. To approximate the derivatives of an unknown function, it is crucial to establish a link between these network norms and the standard Sobolev norm. Particularly, we will monitor the impact of the dimensionality index $d$ in various relationships. This dimensionality index will be explicitly emphasized in the error estimates when both the function and its derivatives are approximated. To accomplish this goal, it is crucial to investigate the intricate relationships that exist among various norms and spaces, including Sobolev norms and spaces.

\subsection{Relationships among different network norms}
We first delve into the relationships among different network norms. In doing so, we will draw upon and extend some existing results to aid in our discussion.

Regarding the relationship between the variation norm and the (extended) Barron norm, we extend the findings in \cite{SX23}, which solely considered the case of $k=1$ while neglecting the dimensionality parameter $d$. Given that the dimensionality index $d$ is at least $2$, we formulate the following proposition:
\begin{proposition}\label{prop:Barron_variation}
    For a unit cubic domain $\Omega=(0,1)^{d}$, the following holds
    \begin{equation}\label{eq:Baaron_var}
        C\left\|f\right\|_{\mathbb{P}_{k}\left(\Omega\right)}\le \left\|f\right\|_{B_{1}^{k}(\Omega)}\le C'd^{\frac{k}{2}}\left\|f\right\|_{\mathbb{P}_{k}},
    \end{equation}
    with $C,C'>0$ solely depending on $k$. 
    Here, the parameters $-c_1=c_2=\diam(\Omega)=\sqrt{d}$ are chosen for the dictionary $\PP_k$.
\end{proposition}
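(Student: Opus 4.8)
The plan is to prove the two inequalities separately, after first recasting both (semi)norms as infima over representing measures. For the extended Barron norm I would absorb the output weight into a signed measure: given a probability distribution $\rho$ in \eqref{eq:representation_extended_Barron_spaces}, the pushforward of $a\,\di\rho$ under $(a,w,b)\mapsto(w,b)$ is a finite signed measure $\lambda$ on $\R^d\times\R$ with $f=\int\sigma_k(w\cdot x+b)\,\di\lambda(w,b)$ and $\int(\|w\|_1+|b|)^k\,\di|\lambda|\le \EE_\rho[\,|a|(\|w\|_1+|b|)^k]$, so that $\|f\|_{B_1^{k}(\Omega)}=\inf_\lambda\int(\|w\|_1+|b|)^k\,\di|\lambda|$. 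For the variation norm I would invoke the standard gauge-to-measure identity $\|f\|_{\PP_k}=\inf\{\|\nu\|:\ f=\int_{\Sf^{d-1}\times[c_1,c_2]}\sigma_k(w\cdot x+b)\,\di\nu\}$, valid because $\Sf^{d-1}\times[c_1,c_2]$ is compact and its image in $L^2(\Omega)$ is bounded. The upper (right) inequality is then immediate: a near-optimal $\nu$ is supported on $\|w\|_2=1$ and $|b|\le c_2=\sqrt d$, hence is an admissible $\lambda$, and $\|w\|_1\le\sqrt d\,\|w\|_2=\sqrt d$ gives $(\|w\|_1+|b|)^k\le(2\sqrt d)^k$, so $\|f\|_{B_1^{k}(\Omega)}\le 2^k d^{k/2}\|f\|_{\PP_k}$, i.e. $C'=2^k$.

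For the lower (left) inequality I would start from a near-optimal $\lambda$ and normalize each neuron by homogeneity, $\sigma_k(w\cdot x+b)=\|w\|_2^k\sigma_k(\hat w\cdot x+\hat b)$ with $\hat w=w/\|w\|_2$ and $\hat b=b/\|w\|_2$. Restricting to $x\in\Omega$ and using $\hat w\cdot x\in[-\sqrt d,\sqrt d]$, I would split the mass into three parts according to $\hat b$: part (A) $|\hat b|\le\sqrt d$, where the neuron is a genuine dictionary element; part (B) $\hat b>\sqrt d$, where the kink lies left of $\Omega$ and the neuron restricts to the ridge polynomial $(\hat w\cdot x+\hat b)^k$; and part (C) $\hat b<-\sqrt d$ (neuron $\equiv 0$ on $\Omega$) together with $w=0$ (a constant). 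Part (A) contributes at most $\int\|w\|_2^k\,\di|\lambda|\le\int(\|w\|_1+|b|)^k\,\di|\lambda|$ to the variation norm, since $\|w\|_2\le\|w\|_1$, and this bound is dimension-free.

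The crux is part (B): I must show that each ridge polynomial $(\hat w\cdot x+\hat b)^k$ with $\hat b>\sqrt d$ lies in $\overline{\conv(\pm\PP_k)}$ with variation norm at most $C_k\hat b^k$, \emph{independent of $d$}. The obstruction is that truncated powers with kinks interior to the range of $\hat w\cdot x$ introduce jumps in the $k$-th derivative, so an exact polynomial representation can only use kinks to the left of $\Omega$, i.e. biases in $[-m_-(\hat w),\sqrt d]$, an interval that degenerates as $\hat w$ approaches a negative multiple of $\mathbf 1$. I would resolve this by flipping to whichever of $\pm\hat w$ makes the admissible interval long: since $\|\hat w_+\|_1+\|\hat w_-\|_1=\|\hat w\|_1\le\sqrt d$, one of the two intervals has length $\ge\sqrt d/2$. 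On that side the problem becomes one-dimensional in $t=\pm\hat w\cdot x$, and I would write $(t+\hat b)^k$ (respectively $(\hat b-t)^k$) as a Lagrange combination $\sum_{j=0}^k\gamma_j\sigma_k(\pm\hat w\cdot x+b_j')$ over $k+1$ equally spaced admissible knots $b_j'$, with $\gamma_j=\prod_{l\ne j}(\hat b\pm b_l')/(b_j'-b_l')$. Bounding the numerators by $2\hat b$ and the knot separations below by $\sqrt d/(2k)$ yields $\sum_j|\gamma_j|\le C_k\hat b^k d^{-k/2}\le C_k\hat b^k$, exactly the dimension-free bound sought; this interpolation estimate is the step I expect to be the main obstacle, as it is where the hypothesis $\hat b>\sqrt d$ must be used to absorb all $d$-dependence.

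Finally I would assemble the pieces. Integrating the part-(B) bound against $|\lambda|$ and using $\hat b^k\|w\|_2^k=|b|^k\le(\|w\|_1+|b|)^k$ gives a contribution $\le C_k\int(\|w\|_1+|b|)^k\,\di|\lambda|$; the constant terms of part (C) are handled by the same interpolation machinery applied to polynomials of degree $0$, whose cost is again controlled by the corresponding Barron mass. Summing (A)--(C), using subadditivity of the variation norm, and taking the infimum over $\lambda$ yields $\|f\|_{\PP_k(\Omega)}\le C_k'\|f\|_{B_1^{k}(\Omega)}$, i.e. the left inequality with $C=1/C_k'$. Both constants depend only on $k$: dimension-dependence enters only through $\hat w\cdot x\in[-\sqrt d,\sqrt d]$, and is absorbed in part (A) by $\|w\|_2\le\|w\|_1$ and in part (B) by the hypothesis $\hat b>\sqrt d$.
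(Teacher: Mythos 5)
Your proposal is correct, and its skeleton coincides with the paper's: both directions are proved by comparing the two dictionaries, the right-hand inequality via $(\|w\|_1+|b|)^k\le(\sqrt d+\sqrt d)^k=2^kd^{k/2}$ for $w\in\Sf^{d-1}$, $|b|\le\sqrt d$, and the left-hand inequality by normalizing each Barron atom to $\|w\|_2=1$ and splitting on whether the normalized bias $\hat b=b/\|w\|_2$ lies in $[-\sqrt d,\sqrt d]$ (a genuine dictionary element, with cost controlled by $\|w\|_2\le\|w\|_1$), below $-\sqrt d$ (zero on $\Omega$), or above $\sqrt d$ (a ridge polynomial), with $w=0$ reducing to the constant $1$. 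Where you genuinely diverge is the polynomial case. The paper uses the exact identity $z^k=\sigma_k(z)+(-1)^k\sigma_k(-z)$, valid for \emph{every} $z\in\R$, so that $(w_0\cdot x+b_i)^k$ is a two-neuron combination of admissible dictionary elements no matter where the kink sits (both $(w_0,b_i)$ and $(-w_0,-b_i)$ are admissible because $c_1=-c_2$); a Vandermonde argument in the bias then expresses the constant and the ridge powers in the basis $\{(w_0\cdot x+b_i)^k\}_{i=1}^{k+1}$. The ``obstruction'' you identify in part (B) — that interior kinks introduce jumps in the $k$-th derivative, forcing one-sided representations and the flip-and-interpolate construction — therefore does not arise in the paper's route: allowing the reflected neuron removes it entirely, and your argument is solving a harder problem than necessary. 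That said, your route is sound, and it buys something the paper leaves implicit: your explicit Lagrange bound $\sum_j|\gamma_j|\le C_k\hat b^k d^{-k/2}$, obtained by spreading the knots over an interval of length $\ge\sqrt d/2$, makes the $d$-uniformity of the constant $C$ transparent, whereas the paper's statement $1\in C(k)\overline{\conv(\pm\PP_k)}$ follows from invertibility of the Vandermonde matrix without tracking how the coefficients depend on the node spacing (one must indeed spread the $b_i$ proportionally to $\sqrt d$, exactly as you do, for the constant to be $d$-free). Both arguments yield the same constants $C,C'$ depending only on $k$.
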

\begin{proof}
    Denote the dictionary
    $
    \B=\left\{ (\|w\|_1 + |b|)^{-k}\sigma_k(w\cdot x + b): w\in\R^{d}, b\in\R \right\}.
    $
    We define $\|f\|_{\B}:=\inf\left\{c>0: f/c\in\overline{\conv(\pm\B)}\right\} = \|f\|_{B_1^k(\Omega)}$.

    On the one hand, the following holds
    $$
    (\|w\|_1+|b|)^k\le \left(\sqrt{d} + \max(|c_1|, |c_2|) \right)^k \le 2^k d^{\frac{k}{2}},
    $$
    for $\Omega = (0,1)^d$, $w\in \Sf^{d-1}$ and $b\in[c_1,c_2]$. Thus we have proven
 $\PP_k\subset 2^k d^{\frac{k}{2}}\overline{\conv(\pm\B)} $.

 On the other hand, if $w=\vec{0}$, the element in $\B$ is either a constant $0$ (when $b\leq 0$) or $1$ ($b>0$).
    Since the constant $0$ undoubtedly belongs to $\overline{\conv(\pm\PP_{k})}$, our focus shifts to the constant $1$.
    Let us consider a special univariate polynomial
    $$(z+b)^{k} = \sum_{j=0}^{k}C_{k}^{j}b^{j} z^{k-j},\quad C_{k}^{j}=\frac{k!}{(k-j)!j!}.$$
    If we select $\{b_i\}_{i=1}^{k+1}$ such that $c_1<b_1<b_2<\cdots<b_{k+1}<c_2$,
    we can construct a matrix
    $$
    A :=
    \begin{bmatrix}
    C_{k}^{0}   & C_{k}^{1} b_1 & \cdots    & C_{k}^{k} b_{1}^{k} \\
    C_{k}^{0}   & C_{k}^{1} b_2 & \cdots    & C_{k}^{k} b_{2}^{k} \\
    \vdots      & \vdots        & \ddots    & \vdots \\
    C_{k}^{0}   & C_{k}^{1} b_{k+1} & \cdots & C_{k}^{k} b_{k+1}^{k}
    \end{bmatrix}.
    $$
    The determinant of $A$ is nonzero due to the Vandermonde matrix's determinant form and the multiplicative property of determinants.
    This implies that $\{z+b_i\}_{i=1}^{k+1}$ forms a basis for univariate polynomials of degree strictly less than $k+1$.
    Consequently, the constant $1$ can be expressed as a linear combination of $(w_0\cdot x+b_1)^{k}$, $(w_0\cdot x+b_2)^{k}$, $\cdots$, $(w_0\cdot x + b_{k+1})^k$ for any $w_0\in\Sf^{d-1}$.
    Furthermore, $(w_0\cdot x + b_i)^{k}$ can be expressed as $\sigma_{k}(w_0\cdot x + b_i)-\sigma_{k}(-w_0\cdot x -b_i)$.
    Therefore, we can conclude that $1\in C(k)\overline{\conv(\pm\PP_k)}$ for some constant $C(k)$ solely related to $k$.

    If $w\neq\vec{0}$, let us consider
    $$
    (\|w\|_1+|b|)^{-k}\sigma_k(w\cdot x +b) = \left(\frac{\|w\|_2}{\|w\|_1+|b|}\right)^k \sigma_k\left(\frac{w}{\|w\|_2}\cdot x + \frac{b}{\|w\|_2}\right).
    $$
    If $\frac{b}{\|w\|_2}<c_1$, the above element is $0$.
    If $\frac{b}{\|w\|_2}\in[c_1,c_2]$, the above element is in $\overline{\conv(\pm\PP_k)}$ since $\frac{\|w\|_2}{\|w\|_1+|b|}\le 1$.
    If $\frac{b}{\|w\|_2}>c_2$, the above element can be interpreted as a polynomial on $\Omega$ 
    as 
    \begin{equation*}
        \begin{split}
            \left(\left\|w\right\|_1+|b|\right)^{-k}\left(w\cdot x + b\right)^{k} & = \left(\frac{\left\|w\right\|_2}{\left\|w\right\|_1+|b|}\right)^{k}\left(\frac{w}{\left\|w\right\|_2}\cdot x + \frac{b}{\left\|w\right\|_2}\right)^{k}\\
            & \le 2^{k}\left(\frac{\left\|w\right\|_2}{\left\|w\right\|_1+|b|}\right)^{k}\left(\frac{w}{\left\|w\right\|_2}\cdot x\right)^{k}+2^{k}\left(\frac{b}{\left\|w\right\|_1+|b|}\right)^{k}.
        \end{split}
    \end{equation*}
    The first term in the above equation holds
    \[
    2^{k}\left(\frac{\left\|w\right\|_2}{\left\|w\right\|_1+|b|}\right)^{k}\left(\frac{w}{\left\|w\right\|_2}\cdot x\right)^{k}\in 2^{k+1}\overline{\conv\left(\pm \mathbb{P}_{k}\right)},
    \]
    since $\left|\frac{\left\|w\right\|_2}{\left\|w\right\|_1+|b|}\right|\le 1$ and $\left(\frac{w}{\left\|w\right\|_2}\cdot x\right)^{k}=\sigma_{k}\left(\frac{w}{\left\|w\right\|_2}\cdot x \right)-\sigma_{k}\left(-\frac{w}{\left\|w\right\|_2}\cdot x\right)$.
    The second term holds
    \[
    2^{k}\left(\frac{b}{\left\|w\right\|_1+|b|}\right)^{k}\le 2^{k}\in 2^{k}C(k)\overline{\conv\left(\pm \mathbb{P}_{k}\right)},
    \]
    because $1\in C(k)\overline{\conv\left(\pm \mathbb{P}_{k}\right)}$.

    To conclude, we have established that
    \[
    \PP_k\subset C(k)d^{\frac{k}{2}}\overline{\conv(\pm\B)} \textrm{ and }
    \B\subset C(k)\overline{\conv(\pm\PP_k)}, 
    \]
    for a generic constant $C(k)$ solely depending on $k$,
    thereby obtaining the desired result in Proposition \ref{prop:Barron_variation}.
\end{proof}

Regarding the connection between the Radon-BV seminorm and the variation norm, \cite{SX23} has established the following relationship.
More precisely, for a given $k$ and any function $f$ that belongs to both the Hilbert space $L^2(\Omega)$ and the Radon-BV space $\F_{k+1}(\Omega)$ while also residing in the variation space $\mathcal{K}(\PP_k)$, the following inequalities hold
\begin{align}
   & \|f\|_{L^2(\Omega)}+|f|_{\F_{k+1}(\Omega)} \le \left(k!+\sup_{g\in\PP_{k}}\|g\|_{L^2(\Omega)}\right) \|f\|_{\PP_{k}}, \nonumber \\
  & \|f\|_{\PP_{k}(\Omega)} \le \frac{1}{k!}\left( 1+M(d) \sup_{g\in\PP_k}\|g\|_{L^2(\Omega)} \right)\|f\|_{L^2(\Omega)}+M(d)|f|_{\F_{k+1}(\Omega)}, \label{eq:var_RBV}
\end{align}
where $M(d)$ is the smallest value which satisfies
$\|p\|_{\PP_{k}} \le M(d)\|p\|_{L^2(\Omega)}$ for all $p\in \Poly_k$. Here $\Poly_k$ is the set of all polynomials with a degree strictly less than $k+1$. Such a constant $M(d)$ implicitly depends on the dimensionality parameter $d$.
The existence of $M(d)$ is ensured by the finite dimension of $\Poly_k$.
For the specific case of $\Omega=(0,1)^d$ we will provide a lower bound for $M(d)$ in the subsequent Lemma \ref{lem:M_lowerbound}.
Meanwhile the $\sup_{g\in \PP_{k}}\|g\|_{L^2(\Omega)}$ can be bounded by $2^{k}d^{\frac{k}{2}}$ since, for any $g\in \PP_k$,
\begin{equation}\label{eq:norm_g}
        \begin{split}
            \|g\|_{L^{2}(\Omega)}^2
              =\int_{\Omega} \left(\sigma_{k}\left(w\cdot x + b\right)\right)^2 \di x \le \int_{\Omega} \left(\|w\|_2^2+1\right)^{k}\left(\|x\|_2^2+b^2\right)^{k} \di x \le 2^{2k}d^{k}.
        \end{split}
    \end{equation}

\subsection{Relationship to the Sobolev norm}
In this subsection, we aim to establish a connection between these network norms and the standard Sobolev norm. Such a relationship is crucial for conducting error estimates of the regularization scheme, particularly when approximating a function and its derivatives.

In \cite{LLMP23}, we, along with our co-authors, demonstrated that extended Barron spaces can be embedded into Sobolev spaces. Here, we refine this result by specifically considering $\Omega=(0,1)^d$ and explicitly evaluating the impact of the dimensionality index $d$ on the embedding's performance.

\begin{lemma}\label{lem_barronsobolev}
    Given any fixed integer $k$ and a fixed domain $\Omega=(0,1)^d$, there exists a constant $C(d,m{\color{red},} k)$ such that for all $f\in B_{1}^{k}(\Omega)$,
    the following holds
    $$
    \|f\|_{H^{m}(\Omega)}\le C(d,m,k)\|f\|_{B_{1}^{k}(\Omega)},\quad 0\le m\le k
    $$
    and $C(d,m,k)\sim d^{\frac{m}{2}}$ asymptotically as $d\to \infty$.
\end{lemma}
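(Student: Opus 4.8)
The plan is to work directly from the integral representation \eqref{eq:representation_extended_Barron_spaces} together with the derivative formula in Definition \ref{def:extendedBarron}, bounding each $\|\partial^\alpha f\|_{L^2(\Omega)}$ for $|\alpha|\le m\le k$ and then assembling $\|f\|_{H^m(\Omega)}^2=\sum_{|\alpha|\le m}\|\partial^\alpha f\|_{L^2(\Omega)}^2$. Fix a representing measure $\rho$ with $f(x)=\int a\sigma_k(w\cdot x+b)\,\rho(\di a,\di w,\di b)$. By Definition \ref{def:extendedBarron}, $\partial^\alpha f(x)=\int \frac{k!}{(k-|\alpha|)!}\,a\,w^\alpha\,\sigma_{k-|\alpha|}(w\cdot x+b)\,\rho(\di a,\di w,\di b)$, so it suffices to estimate the $L^2(\Omega)$ norm of this integral of functions.

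For the per-derivative estimate I would apply Minkowski's integral inequality to pull the $L^2(\Omega)$ norm inside $\rho$, reducing matters to bounding $|w^\alpha|$ and $\|\sigma_{k-|\alpha|}(w\cdot x+b)\|_{L^2(\Omega)}$ (the latter norm being taken in $x$). On $\Omega=(0,1)^d$ one has $|w\cdot x+b|\le\|w\|_1+|b|$ for all $x$, and since $|\Omega|=1$ this gives $\|\sigma_{k-|\alpha|}(w\cdot x+b)\|_{L^2(\Omega)}\le(\|w\|_1+|b|)^{k-|\alpha|}$; likewise $|w^\alpha|\le\|w\|_1^{|\alpha|}\le(\|w\|_1+|b|)^{|\alpha|}$. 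Multiplying, the activation degree is fully absorbed, $|w^\alpha|\,(\|w\|_1+|b|)^{k-|\alpha|}\le(\|w\|_1+|b|)^{k}$, whence $\|\partial^\alpha f\|_{L^2(\Omega)}\le \frac{k!}{(k-|\alpha|)!}\int|a|(\|w\|_1+|b|)^{k}\,\rho(\di a,\di w,\di b)$. Taking the infimum over all representing $\rho$ yields the clean, dimension-free per-index bound $\|\partial^\alpha f\|_{L^2(\Omega)}\le\frac{k!}{(k-|\alpha|)!}\,\|f\|_{B_1^k(\Omega)}$. The key point here is that $|w^\alpha|\le(\|w\|_1+|b|)^{|\alpha|}$ is attained when $w$ concentrates on a single coordinate, so no factor of $d$ is either lost or gained at this stage.

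Finally I would square and sum over $|\alpha|\le m$. Since the number of multi-indices with $|\alpha|=j$ equals $\binom{d+j-1}{j}$, this produces $\|f\|_{H^m(\Omega)}^2\le\bigl(\sum_{j=0}^m\binom{d+j-1}{j}(\tfrac{k!}{(k-j)!})^2\bigr)\|f\|_{B_1^k(\Omega)}^2$, which identifies $C(d,m,k)$ explicitly. The entire dimensional growth is thus purely combinatorial: as $d\to\infty$ with $m,k$ fixed, $\binom{d+j-1}{j}\sim d^j/j!$, so the sum is dominated by its $j=m$ term and $C(d,m,k)^2\sim \frac{d^m}{m!}(\frac{k!}{(k-m)!})^2$, giving $C(d,m,k)\sim d^{m/2}$. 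The main obstacle—indeed essentially the only delicate point—is pinning down this sharp exponent: one must confirm that the activation-norm and $w^\alpha$ estimates contribute no hidden powers of $d$ (so that the growth is exactly the count of order-$m$ multi-indices) and that the top-order $j=m$ term genuinely dominates the lower-order contributions in the large-$d$ regime.
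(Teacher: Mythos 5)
Your proposal is correct and follows essentially the same route as the paper: bound each $\|\partial^\alpha f\|_{L^2(\Omega)}$ by $\frac{k!}{(k-|\alpha|)!}\|f\|_{B_1^k(\Omega)}$ using $|w\cdot x+b|\le\|w\|_1+|b|$ on the unit cube, then sum over multi-indices and extract the asymptotics from the count $\binom{d+j-1}{j}\sim d^j/j!$ of order-$j$ indices. The only cosmetic difference is that you keep a general representing measure and invoke Minkowski's integral inequality, whereas the paper first normalizes to a probability measure on $\{\|w\|_1+|b|=1\}$; the resulting constant $C(d,m,k)$ is identical.
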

\begin{proof}
    The generic inequality has been shown in \cite[Thm 7]{LLMP23}.
    Now, we focus on $f\in B_{1}^{k}(\Omega)$ with $\Omega=(0,1)^d$. By the arguments in Subsection \ref{subse_Barronnormoverview},
    there is a probability $\tilde{\rho}$ on $\{-1,1\}\times \Sf_1^{d}$ such that
    $$
    f(x) = c_{k,\rho}(f)\int_{\{-1,1\}\times S^{d}} \tilde{a}\sigma_{k}\left(\tilde{w}\cdot x + \tilde{b}\right) \di\tilde{\rho}\left(\tilde{a},\tilde{w},\tilde{b}\right),
    $$
    where $S^{d}:=\left\{(w,b)\in \R^d\times \R: \|w\|_1+|b|=1\right\}$ and $c_{k,\rho}(f)\le (1+\epsilon)\|f\|_{B_1^{k}(\Omega)}$ for an arbitrary small $\epsilon>0$.

    Let $\alpha$ be a multi-index.
    Define $|\alpha|=\sum_{i=1}^{d}\alpha_{i}$,
    $w^{\alpha} = \prod_{i}^{d}w_i^{\alpha_i}$ and
    $\partial^{\alpha}f = \partial_{x_1}^{\alpha_1}\cdots\partial_{x_d}^{\alpha_d}f$ by
    \begin{equation*}
        \begin{split}
            \partial^{\alpha} f(x) = c_{k,\rho}(f)\int_{\{-1,1\}\times S^{d}} \frac{k!}{\left(k-|\alpha|\right)!} \tilde{a}\tilde{w}^{\alpha} \sigma_{k-|\alpha|}\left(\tilde{w}\cdot x + \tilde{b}\right) \di \tilde{\rho}\left(\tilde{a},\tilde{w},\tilde{b}\right).
        \end{split}
    \end{equation*}
    Noticing $\|x\|_{\infty}\le 1$ and $|\tilde{w}^{\alpha}|\le 1$, we derive, for $\Omega=(0,1)^d$,
    \begin{equation*}
        \begin{split}
            \int_{\Omega}\left( \int_{\{-1,1\}\times \Sf_1^{d}} \tilde{a}\tilde{w}^{\alpha}\sigma_{k-|\alpha|}\left(\tilde{w}\cdot x + \tilde{b}\right) \di\tilde{\rho}\left(\tilde{a},\tilde{w},\tilde{b}\right) \right)^2 \di x \le 1.
        \end{split}
    \end{equation*}
    Therefore
    \[
    \|\partial^{\alpha} f\|_{L^2(\Omega)}^2 \le \left(c_{k,\rho}(f)\right)^2\left(\frac{k!}{(k-|\alpha|)!}\right)^2
    \]
    and
    \[
    \|f\|_{H^{m}(\Omega)} \le c_{k,\rho}(f) \sqrt{ \sum_{|\alpha|\le m}\left(\frac{k!}{(k-|\alpha|)!}\right)^2 }.
    \]
    Let
    \begin{equation*}
        \begin{split}
            C(d,m,k):= & \sqrt{ \sum_{|\alpha|\le m}\left(\frac{k!}{(k-|\alpha|)!}\right)^2 }
             = \sqrt{ \sum_{s=0}^{m} \frac{(s+d-1)!}{(d-1)!s!} \left(\frac{k!}{(k-s)!}\right)^2 }
             =:  \sqrt{ \sum_{s=0}^{m} C_s}.
        \end{split}
    \end{equation*}
    We observe that there holds, recursively,
    $$
    \frac{k-1+d}{k}\le \frac{C_{s+1}}{C_{s}} = \frac{(s+d)(k-s)^2}{s+1} \le dk^2,\quad 0\le s < k,
    $$
    and $C_0=1$. Therefore, we conclude the asymptotical behaviour of $C\left(d,m,k\right)\sim d^{\frac{m}{2}}$ as $d\to \infty$.
\end{proof}

It can be observed that when $m=0$ the above Lemma \ref{lem_barronsobolev} yields
\begin{align}\label{eq:L2_B1k}
    \|f\|_{L^2(\Omega)}\le C(k) \|f\|_{B_{1}^{k}(\Omega)}
\end{align}
where the dimensionality index $d$ does not exert any influence on the embedding inequality. However, this behavior does not apply to the other two network norms.

Let $h\in\PP_k$, \textit{i.e.} $h(x)=\sigma_{k}(w\cdot x + b)$, and denote a multi-index $\alpha$ satisfying $|\alpha|\le k$. We thus obtain
$$
\partial^{\alpha}h(x) = \frac{k!}{\left(k-|\alpha|\right)!}w^{\alpha}\sigma_{k-|\alpha|}\left(w\cdot x+b\right),
$$
where $\sigma_{0}$ is the Heaviside step function,
and
\begin{equation}\label{eq:def_c}
    \begin{split}
        \|\partial^{\alpha} h\|_{L^2(\Omega)}^2 = & \left(\frac{k!}{\left(k-|\alpha|\right)!}w^{\alpha}\right)^2\int_{\Omega} \sigma_{2(k-|\alpha|)}(w\cdot x+b) \di x\\
            \le & \left(\frac{k!}{\left(k-|\alpha|\right)!}\left(\diam(\Omega)+\max\{|c_1|,|c_2|\}\right)^{k-|\alpha|}\right)^2\int_{\Omega}1\di x\\
            := & c(\Omega,d,\alpha,k,c_1,c_2).
    \end{split}
\end{equation}
Then, for any non-negative integer $m\le k$, the following statement holds
$$
\|h\|_{H^{m}(\Omega)}\le \sqrt{\sum_{|\alpha|\leq m} c(\Omega,d,\alpha,k,c_1,c_2) } =: \tilde{c}(\Omega,d,m,k,c_1,c_2).
$$

We hereby establish the embedding inequality for the variation norm, which enhances the result presented in \cite[Lemma 1]{SX23} by explicitly quantifying the impact of the dimensionality index $d$.
\begin{lemma}\label{lem:var_sobolev}
    Given any fixed integers $d$, $k$ and the domain $\Omega = (0,1)^d$, there exists a constant $\tilde{c}(d,m,k)$
    such that for any $f\in \K(\PP_k)$ the following inequality holds true
    $$\| f \|_{ H^{m} (\Omega) }\le \tilde{c}(d,m,k) \|f\|_{\PP_k},\quad 0\le m\le k,$$
    where it follows that $\tilde{c}(d,m,k) \sim d^{\frac{k}{2}}$ asymptotically as $d\to \infty$.
\end{lemma}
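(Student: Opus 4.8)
The plan is to reduce the estimate for a general $f\in\K(\PP_k)$ to the uniform bound on the dictionary atoms already obtained in the display preceding the lemma, and then to transport that bound through the two operations defining the variation norm: finite signed combination and $L^2(\Omega)$-closure. With the parameter choice $-c_1=c_2=\diam(\Omega)=\sqrt{d}$, inequality \eqref{eq:def_c} gives, for every atom $h\in\PP_k$ and every multi-index $|\alpha|\le m$,
$$
\|\partial^{\alpha}h\|_{L^2(\Omega)}^2\le\left(\frac{k!}{(k-|\alpha|)!}\right)^2(2\sqrt{d})^{2(k-|\alpha|)},
$$
using $|w^{\alpha}|\le\|w\|_2^{|\alpha|}=1$ and $|w\cdot x+b|\le\diam(\Omega)+|b|\le 2\sqrt{d}$ on $\Omega$; summing over $|\alpha|\le m$ yields $\|h\|_{H^m(\Omega)}\le\tilde{c}(\Omega,d,m,k,c_1,c_2)=:\tilde{c}(d,m,k)$ uniformly in $h$. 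For any finite combination $g=\sum_{j}a_jh_j$ with $h_j\in\PP_k$ and $\sum_j|a_j|\le 1$, the triangle inequality in $H^m(\Omega)$ then gives $\|g\|_{H^m(\Omega)}\le\sum_j|a_j|\,\|h_j\|_{H^m(\Omega)}\le\tilde{c}(d,m,k)$, so by homogeneity $\|g\|_{H^m(\Omega)}\le\tilde{c}(d,m,k)\,\|g\|_{\PP_k}$ for all $g\in\conv(\pm\PP_k)$.

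The main obstacle is promoting this bound from $\conv(\pm\PP_k)$ to its closure $\overline{\conv(\pm\PP_k)}$, through which $\|\cdot\|_{\PP_k}$ is defined, since that closure is taken in $L^2(\Omega)$ and $L^2$-convergence alone preserves neither membership in $H^m(\Omega)$ nor the $H^m$ bound. I would close this gap by a weak-compactness argument: normalising to $\|f\|_{\PP_k}=1$, pick $g_n\in\conv(\pm\PP_k)$ with $g_n\to f$ in $L^2(\Omega)$; as $\|g_n\|_{H^m(\Omega)}\le\tilde{c}(d,m,k)$, the sequence is bounded in the Hilbert space $H^m(\Omega)$, so a subsequence converges weakly there, and its weak limit must coincide with $f$ because a weak $H^m$-limit is in particular a weak $L^2$-limit. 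Weak lower semicontinuity of the norm then gives $\|f\|_{H^m(\Omega)}\le\liminf_n\|g_n\|_{H^m(\Omega)}\le\tilde{c}(d,m,k)$, which is exactly the claimed inequality.

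Finally I would track the dimension dependence of $\tilde{c}(d,m,k)^2=\sum_{|\alpha|\le m}(k!/(k-|\alpha|)!)^2(2\sqrt{d})^{2(k-|\alpha|)}$. Grouping by $|\alpha|=s$ and using that the number of multi-indices of order $s$ equals $\binom{s+d-1}{s}\sim d^s/s!$ as $d\to\infty$, the shell of order $s$ contributes $\sim d^s\cdot d^{\,k-s}=d^k$, so summing the finitely many shells $s=0,\dots,m$ gives $\tilde{c}(d,m,k)^2\sim d^k$ and hence $\tilde{c}(d,m,k)\sim d^{k/2}$, independently of $m$. I expect the passage to the $L^2$-closure to be the only genuinely delicate step, the rest being the triangle inequality and an elementary binomial asymptotic.
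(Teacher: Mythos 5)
Your proof is correct. The computational core—bounding each atom $h\in\PP_k$ in $H^m(\Omega)$ via $|w^\alpha|\le 1$ and $|w\cdot x+b|\le 2\sqrt d$, then summing over shells $|\alpha|=s$ with $\binom{s+d-1}{s}\sim d^s/s!$ to get $\tilde c(d,m,k)^2\sim d^k$—is exactly the paper's calculation (the paper organizes the asymptotics through the ratio $I_{s+1}/I_s$ rather than the per-shell estimate $d^s\cdot d^{k-s}$, but the conclusion and the constants are the same). Where you genuinely diverge is the promotion from atoms to a general $f\in\K(\PP_k)$: the paper invokes the integral representation of \cite[Lemma 3]{SX23}, writing $f=\int_{\PP_k}h\,\di\mu(h)$ with $\|\mu\|_{\M(\PP_k)}<(1+\epsilon)\|f\|_{\PP_k}$ and pushing the $H^m$ bound through the vector-valued integral, whereas you work directly from the gauge definition of $\|\cdot\|_{\PP_k}$, bound finite convex combinations by the triangle inequality, and pass to the $L^2$-closure by extracting a weakly convergent subsequence in $H^m(\Omega)$ and using weak lower semicontinuity of the norm. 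Your route is self-contained (no external representation lemma) and correctly identifies and repairs the only delicate point, namely that $L^2$-closure does not a priori preserve $H^m$ bounds; the paper's route is shorter once the representation lemma is granted and avoids the normalization/closure bookkeeping (note only that when the infimum defining $\|f\|_{\PP_k}$ is not attained you should approximate with $f/c$ for $c\downarrow\|f\|_{\PP_k}$ and use closedness of $\overline{\conv(\pm\PP_k)}$ in $L^2$, which your normalization implicitly does). Either argument yields the stated inequality with the same constant and the same asymptotics $\tilde c(d,m,k)\sim d^{k/2}$.
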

\begin{proof}
    The generic inequality can be found in \cite[Lemma 1]{SX23}.
    Here, we specifically focus on the scenario where the domain $\Omega=(0,1)^d$.
    By making the selections $-c_1=c_2=\sqrt{d}$ and considering the diameter of the domain as $\diam(\Omega)=\sqrt{d}$, we derive
    $$c(d,\alpha,k)=\left(\frac{k!}{\left(k-|\alpha|\right)!}\right)^2\left(2\sqrt{d}\right)^{2(k-|\alpha|)},$$
    where the $c(d,\alpha,k):= c(\Omega,d,\alpha,k,c_1,c_2)$ is introduced in \eqref{eq:def_c}.
    Thus, we can estimate
    \begin{equation*}
        \begin{split}
            \sum_{|\alpha|\le m} c(d,\alpha,k)
                = & \sum_{s=0}^{m} \left( \sum_{|\alpha|=s} \left(\frac{k!}{\left(k-|\alpha|\right)!}\right)^2 \left(2\sqrt{d}\right)^{2(k-|\alpha|)} \right) \\
                = & \sum_{s=0}^{m} \frac{\left(s+d-1\right)!}{(d-1)!s!} \left(\frac{k!}{\left(k-s\right)!}\right)^2\left(2\sqrt{d}\right)^{2(k-s)}
                =:  \sum_{s=0}^{m} I_s.
        \end{split}
    \end{equation*}
    Here we can verify that
    $
    I_{s+1} = \frac{(s+d)(k-s)^2}{4(s+1)d}I_s,\quad 0\le s<k.
    $
    Notice that
    $$
    \frac{k+d-1}{4kd} \le \frac{\left(s+d\right)(k-s)^2}{4(s+1)d}\le \frac{k^2}{4}.
    $$
    Therefore, the ratio $\frac{I_{s+1}}{I_s}$ has an finite upper bound as $d\to \infty$.
    Given that $I_0=\left(2\sqrt{d}\right)^{2k}$,
    we can deduce that $\sum_{|\alpha|\le m} c(d,\alpha,k) \sim d^{k}$ asymptotically as $d\to\infty$ and denote $\tilde{c}(d,m,k):=\sqrt{\sum_{|\alpha|\le m} c(d,\alpha,k)}$.

    Referring to \cite[Lemma 3]{SX23}, for any $f\in \K(\PP_k)$, there exist a Borel measure $\mu$ on $\PP_k$ and an arbitrary small $\epsilon>0$, such that
    $f = \int_{h\in\PP_k}h\di \mu(h),$
    and $\|\mu\|_{\M(\PP_k)}<(1+\epsilon)\|f\|_{\PP_{k}}$.
    Therefore, we obtain
    $$ \|f\|_{H^{m}(\Omega)}\le (1+\epsilon)\sup_{h\in\PP_{k}}\|h\|_{H^{m}(\Omega)}\|f\|_{\PP_k}.$$
    Using the above inequality and $\sup_{h\in\PP_k} \|h\|_{H^{m}(\Omega)}\le \tilde{c}(d,m,k)$, the proof is concluded.
\end{proof}

Utilize the Lemma \ref{lem:var_sobolev} along with \eqref{eq:var_RBV}, we derive the embedding property of Radon-BV spaces.
\begin{corollary}\label{cor:RBV_sobolev}
    Let $\Omega=(0,1)^{d}$.
    For any $f\in \F_{k+1}(\Omega) \cap L^2(\Omega)$, the following holds
$$
\|f\|_{H^m\left(\Omega\right)} \le \tilde{c}(d,m,k) \frac{1}{k!}\left( 1+ 2^{k}M(d)d^{\frac{k}{2}} +k!M(d)\right)\left(\|f\|_{L^2(\Omega)}+ |f|_{\F_{k+1}(\Omega)} \right),
$$
as $d\to \infty$. Here, $M(d)$ represents the smallest value that satisfies the inequality
$\|p\|_{\PP_{k}} \le M(d) \|p\|_{L^2(\Omega)}$ for all $p\in \Poly_k$.
\end{corollary}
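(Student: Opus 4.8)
The plan is to obtain the estimate by composing the two embedding results already established in this section, namely Lemma~\ref{lem:var_sobolev} and the second inequality in~\eqref{eq:var_RBV}. First I would observe that the hypotheses are compatible: for $f\in\F_{k+1}(\Omega)\cap L^2(\Omega)$ the right-hand side of~\eqref{eq:var_RBV} is finite, so $\|f\|_{\PP_k}<\infty$ and hence $f\in\K(\PP_k)$; this is precisely the membership needed to invoke Lemma~\ref{lem:var_sobolev}. That lemma then gives, for $0\le m\le k$,
$$
\|f\|_{H^m(\Omega)}\le \tilde{c}(d,m,k)\,\|f\|_{\PP_k},
$$
reducing the problem to controlling $\|f\|_{\PP_k}$ by the pair $\|f\|_{L^2(\Omega)}$ and $|f|_{\F_{k+1}(\Omega)}$.

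That control is exactly what~\eqref{eq:var_RBV} supplies:
$$
\|f\|_{\PP_k}\le \frac{1}{k!}\left(1+M(d)\sup_{g\in\PP_k}\|g\|_{L^2(\Omega)}\right)\|f\|_{L^2(\Omega)}+M(d)\,|f|_{\F_{k+1}(\Omega)}.
$$
Next I would insert the explicit dictionary bound $\sup_{g\in\PP_k}\|g\|_{L^2(\Omega)}\le 2^k d^{k/2}$ from~\eqref{eq:norm_g}, valid for the choice $-c_1=c_2=\diam(\Omega)=\sqrt{d}$, so that the coefficient of $\|f\|_{L^2(\Omega)}$ is at most $\frac{1}{k!}(1+2^k M(d)d^{k/2})$. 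Writing the coefficient of $|f|_{\F_{k+1}(\Omega)}$ as $M(d)=\frac{1}{k!}\,k!M(d)$, I note that both coefficients are dominated by the single constant $\frac{1}{k!}(1+2^k M(d)d^{k/2}+k!M(d))$, each being obtained from it by discarding nonnegative terms. Factoring this common constant out of the sum and then multiplying through by $\tilde{c}(d,m,k)$ from the first step yields the claimed inequality.

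There is no genuine obstacle in this argument; it is a direct concatenation of two inequalities together with one dictionary estimate. The only point requiring care is the bookkeeping of the three constants, so that the final coefficient cleanly majorizes both of the individual coefficients appearing in~\eqref{eq:var_RBV}. Finally, the asymptotic phrasing ``as $d\to\infty$'' records the dominant growth in $d$: one combines $\tilde{c}(d,m,k)\sim d^{k/2}$ from Lemma~\ref{lem:var_sobolev} with the factor $2^k M(d)d^{k/2}$, the residual $d$-dependence being carried by $M(d)$, whose growth is quantified separately via the lower bound of Lemma~\ref{lem:M_lowerbound}.
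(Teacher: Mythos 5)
Your proposal is correct and follows essentially the same route as the paper, which derives the corollary directly by combining Lemma~\ref{lem:var_sobolev} with the second inequality in~\eqref{eq:var_RBV} and the dictionary bound~\eqref{eq:norm_g}. The only addition on your part is the explicit bookkeeping showing that both coefficients are majorized by the single constant $\frac{1}{k!}\left(1+2^kM(d)d^{\frac{k}{2}}+k!M(d)\right)$, which the paper leaves implicit.
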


To further elaborate on the impact of the dimensionality index $d$ towards the Radon-BV seminorm, we derive a lower bound for $M(d)$ below when the domain is defined as $\Omega=(0,1)^d$.
\begin{lemma}\label{lem:M_lowerbound}
    For any fixed $k$, let $M(d)$ denote the smallest value that satisfies the inequality $\|p\|_{\PP_k}\le M(d)\|p\|_{L^2(\Omega)}$ for all $p$ in the polynomial space with a degree strictly less than $k+1$.
    Let $\Omega=(0,1)^d$, and $-c_1=c_2=\sqrt{d}$ for the dictionary $\PP_k$. Then, it holds that $M(d)\gtrsim  d^{-\frac{k}{2}}$ asymptotically.
\end{lemma}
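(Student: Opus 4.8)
The plan is to extract the lower bound from a single, carefully chosen polynomial, exploiting the fact that $M(d)$ is itself the supremum of the ratio $\|p\|_{\PP_k}/\|p\|_{L^2(\Omega)}$ over all $p\in\Poly_k$: exhibiting one polynomial whose ratio is of order $d^{-k/2}$ therefore already forces $M(d)\gtrsim d^{-k/2}$. I would take the simplest candidate, the constant polynomial $p\equiv 1$. Since $\Omega=(0,1)^d$ has unit volume, $\|1\|_{L^2(\Omega)}=1$, so the defining inequality $\|p\|_{\PP_k}\le M(d)\|p\|_{L^2(\Omega)}$ applied to $p\equiv 1$ gives $M(d)\ge\|1\|_{\PP_k}$. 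The whole task thus reduces to a lower bound on the variation norm of the constant function.

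To bound $\|1\|_{\PP_k}$ from below I would invoke the testing (polar-duality) principle for the gauge functional $\|\cdot\|_{\PP_k}$: for any $L^2(\Omega)$-continuous linear functional $L$ and any $g\in\overline{\conv(\pm\PP_k)}$ one has $|L(g)|\le\sup_{h\in\PP_k}|L(h)|$, the inequality holding on finite convex combinations and then passing to the $L^2$-closure precisely because $L$ is continuous. By definition of the gauge, the constant function $\tfrac{1}{\|1\|_{\PP_k}}$ lies in $\overline{\conv(\pm\PP_k)}$, and applying the testing inequality to it yields $\|1\|_{\PP_k}\ge |L(1)|\big/\sup_{h\in\PP_k}|L(h)|$.

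The choice that makes this productive is the integration functional $L(f)=\langle 1,f\rangle=\int_\Omega f\,\di x$, which is continuous on $L^2(\Omega)$ with $L(1)=\mathrm{vol}(\Omega)=1$. For the denominator I would use the crude pointwise bound valid for every $h=\sigma_k(w\cdot x+b)\in\PP_k$ with $w\in\Sf^{d-1}$ and $b\in[c_1,c_2]$: on $\Omega$ we have $\|x\|_2\le\sqrt d$ and $\|w\|_2=1$, $|b|\le c_2=\sqrt d$, hence $|w\cdot x+b|\le 2\sqrt d$ and thus $0\le h(x)\le (2\sqrt d)^k$. Integrating over the unit-volume domain gives $\sup_{h\in\PP_k}|L(h)|\le 2^{k}d^{k/2}$. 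Combining the two estimates produces $M(d)\ge\|1\|_{\PP_k}\ge 2^{-k}d^{-k/2}$, i.e. $M(d)\gtrsim d^{-k/2}$.

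I expect no serious obstacle; the two points requiring care are (i) recognizing that one test polynomial suffices because $M(d)$ is a supremum, so a lower bound for $\|1\|_{\PP_k}/\|1\|_{L^2(\Omega)}$ immediately lower-bounds $M(d)$, and (ii) justifying the passage of the testing inequality to the $L^2$-closure $\overline{\conv(\pm\PP_k)}$, which is immediate from the $L^2$-continuity of $L(f)=\int_\Omega f\,\di x$. It is worth remarking that the scaling $-c_1=c_2=\diam(\Omega)=\sqrt d$ is exactly what forces the $d^{k/2}$ growth of $\sup_{h\in\PP_k}|L(h)|$ and hence pins the exponent $-k/2$; the same constant function in fact also admits a matching upper bound of order $d^{-k/2}$ (by dividing a single neuron $\sigma_k(w\cdot x+\sqrt d)\approx d^{k/2}$ on $\Omega$), confirming that $p\equiv 1$ is essentially extremal, though only the lower bound is needed here.
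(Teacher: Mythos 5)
Your proposal is correct, but it follows a genuinely different route from the paper. The paper's proof combines its embedding Lemma \ref{lem:var_sobolev}, $\|p\|_{H^{m}(\Omega)}\le \tilde{c}(d,m,k)\|p\|_{\PP_k}$ with $\tilde{c}(d,m,k)\sim d^{k/2}$, with the defining inequality for $M(d)$ to get $\|p\|_{H^m(\Omega)}\le \tilde{c}(d,m,k)M(d)\|p\|_{L^2(\Omega)}$, and then tests this with the polynomial $p(x)=(x_1-\tfrac12)^k$, whose $H^m$ and $L^2$ norms on $(0,1)^d$ are both of order $1$ uniformly in $d$; the bound $M(d)\gtrsim \tilde{c}(d,m,k)^{-1}\sim d^{-k/2}$ follows. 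You instead test with $p\equiv 1$ and lower-bound $\|1\|_{\PP_k}$ directly by duality, pairing the gauge set $\overline{\conv(\pm\PP_k)}$ against the $L^2$-continuous functional $L(f)=\int_\Omega f\,\di x$ and using the pointwise bound $0\le\sigma_k(w\cdot x+b)\le (2\sqrt d)^k$ on the dictionary. Both arguments ultimately rest on the same quantitative fact --- that dictionary elements can be as large as $d^{k/2}$ on $\Omega$ under the normalization $-c_1=c_2=\sqrt d$ --- but yours is more self-contained (it bypasses Lemma \ref{lem:var_sobolev} entirely) and yields the explicit constant $2^{-k}$, whereas the paper's reuses machinery it has already built. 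The only point you gloss over is that $1/\|1\|_{\PP_k}\in\overline{\conv(\pm\PP_k)}$, i.e.\ that the infimum in the gauge is attained; this is immediate from the closedness of $\overline{\conv(\pm\PP_k)}$ in $L^2(\Omega)$ (or can be avoided by working with $\|1\|_{\PP_k}+\epsilon$ and letting $\epsilon\to 0$), together with the finiteness $\|1\|_{\PP_k}\le C(k)$ already established in the proof of Proposition \ref{prop:Barron_variation}.
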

\begin{proof}

    According to Lemma \ref{lem:var_sobolev} and (\ref{eq:var_RBV}), we have
    $$\|p\|_{H^{m}(\Omega)} \le \tilde{c}(d,m,k) \|p\|_{\PP_{k}}\le \tilde{c}(d,m,k) M(d)\|p\|_{L^2(\Omega)},$$
    for all $p\in \Poly_k$ with $\tilde{c}(d,m,k) \sim d^{\frac{k}{2}}$ asymptotically since $|p|_{\F_{k+1}(\Omega)}=0$.

    Let us consider the spacial polynomial $p(x) = \left(x_1-\frac{1}{2}\right)^k$ for $\Omega=(0,1)^d$.
    This polynomial only depends on one variable $x_1$.
    Given a multi-index $\alpha$ with $|\alpha|\le k$, we observe that if $\alpha_i>0$ for some index $i > 1$, then the partial derivative $\partial^{\alpha}p$ vanishes, i.e., $\partial^{\alpha}p=0$.
    Otherwise,
    \begin{equation*}
        \begin{split}
        \|\partial_{x_1}^{\alpha_1}p\|_{L^2(\Omega)}^2
        & = \int_{0}^{1} \left(\frac{k!}{\left(k-\alpha_1\right)!}\left(x_1-\frac{1}{2}\right)^{k-\alpha_1}\right)^2\di x_1
        \end{split} \sim 1,
    \end{equation*}
    when $d\to \infty$.
    It is also true that $\|p\|_{H^{m}(\Omega)}\sim 1$ when $d\to\infty$.
    Therefore, in the limit as  $d\to \infty$, we have
    $M(d)\gtrsim \tilde{c}(d,m,k)^{-1} \sim d^{-\frac{k}{2}}$.
\end{proof}
\begin{remark}
    We note that the rate in Lemma \ref{lem:M_lowerbound} may not be the sharpest possible. Determining the precise sharp lower bound as well as the upper bound for $M(d)$ remains an unresolved problem.
\end{remark}

\subsection{Summary of the relationship among different norms }

The main result in this section can be summarised as the following theorem.
\begin{theorem}\label{thm:sobolev_3spaces}
    Let $\Omega=(0,1)^{d}$,
    \begin{enumerate}[label=(\alph*)]
        \item\label{enm:sobolev_barran} if the function $f$ belongs to an extended Barron space i.e. $f\in B_1^{k}(\Omega)$, there hold
        $$\|f\|_{H^{m}(\Omega)}\le C(d,m,k)\|f\|_{B_{1}^{k}(\Omega)},\quad 0\le m\le k,$$
        and $C(d,m,k)\sim d^{\frac{m}{2}}$ asymptotically as $d\to \infty$;

        \item\label{enm:sobolev_var} if the function $f$ belongs to a variation space i.e. $f\in\K(\PP_{k})$ with $-c_1=c_2=\sqrt{d}$ for $\PP_{k}$, there hold
        $$\|f\|_{H^{m}(\Omega)}\le \tilde{c}(d,m,k)\|f\|_{\PP_{k}},\quad 0\le m\le k,$$
        and $\tilde{c}(d,m,k)\sim d^{\frac{k}{2}}$ asymptotically as $d\to \infty$;

        \item\label{enm:sobolev_RBV} if the function $f$ belongs to a Radon-BV space and is square-integrable i.e. $f\in \F_{k+1}(\Omega) \cap L^2(\Omega)$, there hold
        \begin{equation*}
            \begin{split}
                \|f\|_{H^{m}(\Omega)} \le &\tilde{c}( d,m,k)\frac{1}{k!}\left( 1+ 2^{k}M(d)d^{\frac{k}{2}} +k!M(d)\right)\\
                &\times \left(\|f\|_{L^{2}(\Omega)} + |f|_{\F_{k+1}(\Omega)}\right),
            \end{split}
        \end{equation*}
        for $0\le m\le k$, with the same constant $\tilde{c}(d,m,k)$ as in the above item (b),
        and $M(d)\gtrsim d^{-\frac{k}{2}}$ asymptotically as $d\to \infty$.
    \end{enumerate}
\end{theorem}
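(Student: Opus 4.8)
The plan is to treat this theorem as a consolidation of the three embedding results already developed in this section, so that the proof reduces to invoking the corresponding lemma or corollary for each item and checking that the stated asymptotic constants are inherited correctly. Concretely, I would dispatch the three items one at a time, since each has been prepared as a standalone statement: item \ref{enm:sobolev_barran} is nothing more than Lemma \ref{lem_barronsobolev}, which already supplies both the inequality $\|f\|_{H^m(\Omega)}\le C(d,m,k)\|f\|_{B_1^k(\Omega)}$ for $0\le m\le k$ and the asymptotic $C(d,m,k)\sim d^{m/2}$; and item \ref{enm:sobolev_var} is verbatim Lemma \ref{lem:var_sobolev}, under the normalization $-c_1=c_2=\sqrt{d}$ for the dictionary $\PP_k$, giving $\tilde{c}(d,m,k)\sim d^{k/2}$.

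For item \ref{enm:sobolev_RBV} I would proceed in two stages. The inequality itself is exactly the content of Corollary \ref{cor:RBV_sobolev}, which I obtain by composing the variation-to-Sobolev embedding of Lemma \ref{lem:var_sobolev} with the variation/Radon-BV comparison \eqref{eq:var_RBV}: starting from $\|f\|_{H^m(\Omega)}\le\tilde{c}(d,m,k)\|f\|_{\PP_k}$ and then bounding $\|f\|_{\PP_k}$ by $\frac{1}{k!}(1+M(d)\sup_{g\in\PP_k}\|g\|_{L^2(\Omega)})\|f\|_{L^2(\Omega)}+M(d)|f|_{\F_{k+1}(\Omega)}$, while substituting the estimate $\sup_{g\in\PP_k}\|g\|_{L^2(\Omega)}\le 2^k d^{k/2}$ from \eqref{eq:norm_g}. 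The remaining asymptotic statement $M(d)\gtrsim d^{-k/2}$ is furnished by Lemma \ref{lem:M_lowerbound}, so item \ref{enm:sobolev_RBV} follows by pasting these two pieces together.

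Since every ingredient has already been established, there is no genuine obstacle in the assembly; the only point requiring care is consistency of constants across items \ref{enm:sobolev_var} and \ref{enm:sobolev_RBV}. The same symbol $\tilde{c}(d,m,k)$ appears in both, and I would emphasize that this is legitimate precisely because both embeddings are derived under the identical choice $-c_1=c_2=\diam(\Omega)=\sqrt{d}$, so the constant computed in the proof of Lemma \ref{lem:var_sobolev} is reused unchanged. Had these lemmas not been available, the delicate part would instead have been item \ref{enm:sobolev_RBV}, as it is the only one that simultaneously requires an upper bound routed through the variation norm and a sharp tracking of the dimensional dependence of $M(d)$; but given the preceding development, the theorem is a direct corollary of Lemmas \ref{lem_barronsobolev}, \ref{lem:var_sobolev}, \ref{lem:M_lowerbound} and Corollary \ref{cor:RBV_sobolev}.
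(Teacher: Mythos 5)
Your proposal matches the paper's own proof exactly: the theorem is assembled by citing Lemma \ref{lem_barronsobolev} for item \ref{enm:sobolev_barran}, Lemma \ref{lem:var_sobolev} for item \ref{enm:sobolev_var}, and Corollary \ref{cor:RBV_sobolev} together with Lemma \ref{lem:M_lowerbound} for item \ref{enm:sobolev_RBV}. Your additional remark on the consistency of $\tilde{c}(d,m,k)$ across items \ref{enm:sobolev_var} and \ref{enm:sobolev_RBV} under the common normalization $-c_1=c_2=\sqrt{d}$ is correct and consistent with how the paper uses that constant.
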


\begin{proof}
The proof is detailed in the above subsections. Specifically, item \ref{enm:sobolev_barran} is established utilizing Lemma \ref{lem_barronsobolev}. Furthermore, the item \ref{enm:sobolev_var} is derived based on Lemma \ref{lem:var_sobolev}. Lastly, item \ref{enm:sobolev_RBV} is proven by virtue of Corollary \ref{cor:RBV_sobolev} and Lemma \ref{lem:M_lowerbound}.
\end{proof}

\section{Revisit of the approximation property}\label{se_revisitUA}
To investigate error bounds associated with the regularization scheme (\ref{eq:min_problem}) utilizing different penalty terms, it is imperative to implement interpolation techniques across various Sobolev spaces. To this end, revisiting the approximation property in the Hilbert space $L^2(\Omega)$ is crucial for our analysis. The subsequent approximation theories take into account the dimensionality of the function domains explicitly.

We first present the approximation result for the extended Barron spaces, which does not depend on the dimensionality index $d$ for the specific domain $\Omega$ and enhance the existing result presented in \cite[Theorem 8]{LLMP23}.
\begin{lemma}\label{lem:approx_Barron}
    Let $\Omega=(0,1)^{d}$ and $n\in \N$. Assume that $f$ belongs to the extended Barron space $B_1^{k}(\Omega)$ and for any $\epsilon$ there exist a probability distribution $\rho_{\epsilon}$ such that $\|f\|_{1,\rho_{\epsilon}}^k $ in Definition \ref{def:extendedBarron} satisfies $\|f\|_{B_{1,\rho_{\epsilon}}^k} \leq (1+\epsilon) \|f\|_{B_1^{k}(\Omega)}$. Then there is a two-layer neural network $f_n=\frac{1}{n}\sum_{i=1}^{n}a_i\sigma_{k}\left( w_i\cdot x + b_i \right)$, yielding
    \begin{align}\label{eq:barronUni}
    \|f-f_n\|_{L^2(\Omega)} \le C(k)(1+\epsilon)n^{-\frac{1}{2}}\|f\|_{B_1^{k}(\Omega)},
    \end{align}
    and $\frac{1}{n}\sum_{i=1}^{n}|a_i|\left(
    \|w_i\|_1+|b_i|\right)^{k}\le (1+\epsilon)\|f\|_{B_{1}^{k}(\Omega)}$.
\end{lemma}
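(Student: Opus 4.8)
The plan is to deploy a Maurey-type empirical (Monte Carlo) sampling argument, exploiting that membership in $B_1^{k}(\Omega)$ furnishes an integral representation of $f$ together with a near-optimal distribution. By hypothesis there is a probability distribution $\rho_{\epsilon}$ realizing \eqref{eq:representation_extended_Barron_spaces} with $B := \|f\|_{B_{1,\rho_\epsilon}^{k}} \le (1+\epsilon)\|f\|_{B_1^{k}(\Omega)}$; if $B=0$ then $f\equiv 0$ and the claim is trivial, so I assume $B>0$. I would then write $f(x)=\EE_{\rho_\epsilon}\!\left[a\sigma_k(w\cdot x+b)\right]$ and regard each neuron as a random function of $x$.

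The crucial preprocessing step is a change of measure. I would reweight $\rho_{\epsilon}$ into a new probability measure $\tilde\rho$ whose density with respect to $\rho_\epsilon$ equals $|a|\lr{\norm{w}{1}+|b|}^{k}/B$; this integrates to one precisely because $B$ is the first moment of $|a|\lr{\norm{w}{1}+|b|}^{k}$ under $\rho_\epsilon$. Rewriting the representation against $\tilde\rho$ expresses $f$ as the $\tilde\rho$-expectation of the normalized neuron $G(a,w,b;x):=B\,\sgn(a)\,\sigma_k(w\cdot x+b)/\lr{\norm{w}{1}+|b|}^{k}$. The payoff is a uniform bound: for every $x\in\Omega=(0,1)^d$ one has $|w\cdot x+b|\le \norm{w}{1}+|b|$, whence $0\le\sigma_k(w\cdot x+b)\le\lr{\norm{w}{1}+|b|}^{k}$ and therefore $\abs{G(a,w,b;x)}\le B$ on all of $\Omega$.

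With this uniform bound in hand, I would draw $n$ independent samples $(a_i,w_i,b_i)\sim\tilde\rho$ and set $f_n(x)=\frac1n\sum_{i=1}^{n}G(a_i,w_i,b_i;x)$, which is an unbiased estimator, $\EE[f_n(x)]=f(x)$ pointwise. Fubini's theorem and independence then give $\EE\,\norm{f-f_n}{L^2(\Omega)}^{2}=\frac1n\int_\Omega\operatorname{Var}_{\tilde\rho}\lr{G(\cdot;x)}\di x\le\frac1n\int_\Omega\EE_{\tilde\rho}\!\left[G(\cdot;x)^2\right]\di x\le B^2/n$, using $\abs{G}\le B$ and $|\Omega|=1$. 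Since the expected squared error is at most $B^2/n$, the probabilistic method guarantees a realization $(a_i,w_i,b_i)_{i=1}^{n}$ with $\norm{f-f_n}{L^2(\Omega)}\le n^{-1/2}B\le(1+\epsilon)n^{-1/2}\|f\|_{B_1^{k}(\Omega)}$, which is \eqref{eq:barronUni} (with $C(k)=1$, any convenient constant absorbing the slack). Reading off the network coefficients $\tilde a_i:=B\,\sgn(a_i)/\lr{\norm{w_i}{1}+|b_i|}^{k}$, I would finally verify $\abs{\tilde a_i}\lr{\norm{w_i}{1}+|b_i|}^{k}=B$ for each $i$, so that $\frac1n\sum_{i=1}^{n}\abs{\tilde a_i}\lr{\norm{w_i}{1}+|b_i|}^{k}=B\le(1+\epsilon)\|f\|_{B_1^{k}(\Omega)}$, delivering the second assertion.

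The main obstacle, and the only genuinely delicate point, is precisely this change of measure. The $p=1$ extended Barron norm controls only the first moment $\EE_{\rho_\epsilon}\!\left[|a|\lr{\norm{w}{1}+|b|}^{k}\right]$, whereas a naive variance estimate on samples drawn directly from $\rho_\epsilon$ would require the second moment $\EE_{\rho_\epsilon}\!\left[a^2\lr{\norm{w}{1}+|b|}^{2k}\right]$, which need not be finite nor suitably controlled. The reweighting is exactly what converts first-moment control into a uniform $L^\infty$ bound on the sampled integrand, and it is this uniform bound together with $|\Omega|=1$ that renders the resulting rate dimension-free, matching the claim that the estimate is independent of $d$.
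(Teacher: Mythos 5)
Your proposal is correct and follows essentially the same route as the paper: the paper's proof is a Maurey/Monte Carlo sampling argument inherited from \cite[Theorem 8]{LLMP23}, drawing i.i.d.\ normalized neurons from a reweighted near-optimal distribution and bounding the variance by the second moment of a single neuron via the embedding \eqref{eq:L2_B1k}. Your version simply makes the single-neuron bound explicit through the pointwise estimate $\sigma_k(w\cdot x+b)\le(\|w\|_1+|b|)^k$ on $(0,1)^d$, which is exactly the content of the constant $C(k)$ there.
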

\begin{proof}
The proof relies on that of  \cite[Theorem 8]{LLMP23}. It is crucial to recall the random vectors $\left\{g_i(\cdot)\right\}_{i=1}^{n}$ and distribution $\bar{\rho}$ as described in the proof of \cite[Theorem 8]{LLMP23} such that
    \begin{equation*}
        \begin{split}
           \EE_{\bar{\rho}} \|f-f_n\|_{L^2(\Omega)}^2 \le & \frac{1}{n} \EE_{\bar{\rho}}\|g_1\|_{L^{2}(\Omega)}^{2}
           \overset{(\ref{eq:L2_B1k})}{\le} C(k)^2\frac{1}{n}\EE_{\rho}\|g_1\|_{B_1^{k}(\Omega)}^{2}
            \\
            \le & C(k)^2\frac{1}{n}(1+\epsilon)^2\|f\|_{B_1^{k}(\Omega)},
        \end{split}
    \end{equation*}
    where we have implemented the refined estimate (\ref{eq:L2_B1k}). Then the rest of the arguments hold as in the proof of \cite[Theorem 8]{LLMP23}.
\end{proof}

We shall emphasize that the constant $C(k)$ in (\ref{eq:barronUni}) is independent of the dimensionality index $d$ for $f\in B_1^{k}(\Omega)$. This independence is a significant confirmation of overcoming the curse of dimensionality in high-dimensional settings. However, it is worth noting that this behavior does not extend to the other two spaces, as will be demonstrated in the subsequent results.

\begin{lemma}\label{lem:approx_var_L2}
    Let $\Omega=(0,1)^d$, and $n\in\N$. Denote the dictionary $\PP_k$ as in Definition \ref{def:variation_space} with $-c_1=c_2=\sqrt{d}$.
    Assume $f$ belongs to the variation space $\K(\PP_k)$.
    Then for any $\epsilon>0$, there is a two-layer neural network $f_n=\sum_{i=1}^{n}a_i\sigma_{k}\left(
    w_i+b_i\right)$ satisfying $\|w_i\|_2=1$, $|b_i|\le\sqrt{d}$,
    $$ \|f-f_n\|_{L^2(\Omega)} \le (1+\epsilon)2^{k} d^{\frac{k}{2}}n^{-\frac{1}{2}}\|f\|_{\PP_k},$$
    and $\sum_{i=1}^{n}|a_i|=(1+\epsilon)\|f\|_{\PP_k}$.
\end{lemma}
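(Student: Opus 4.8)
The plan is to run the classical Maurey/Monte-Carlo empirical sampling argument adapted to the dictionary $\PP_k$, where the dimensional factor $2^k d^{k/2}$ enters solely through the uniform $L^2(\Omega)$-bound on the dictionary elements established in \eqref{eq:norm_g}.

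First I would invoke \cite[Lemma 3]{SX23} (already recalled in the proof of Lemma \ref{lem:var_sobolev}) to obtain, for any $\epsilon>0$, a Borel measure $\mu$ on $\PP_k$ with $f=\int_{h\in\PP_k} h\,\di\mu(h)$ and $\|\mu\|_{\M(\PP_k)}\le (1+\epsilon)\|f\|_{\PP_k}=:V$. Writing $\di\mu = s(h)\,\di|\mu|(h)$ with $|s(h)|=1$ and setting $p:=|\mu|/\|\mu\|$, a probability measure on $\PP_k$, I define the random element $G:=\|\mu\|\,s(h)\,h$ with $h\sim p$. Then $\EE_p[G]=\int_{\PP_k} h\,\di\mu(h)=f$, and by the dictionary bound \eqref{eq:norm_g}, namely $\sup_{h\in\PP_k}\|h\|_{L^2(\Omega)}\le 2^k d^{k/2}$ for the choice $-c_1=c_2=\sqrt d$, one has $\|G\|_{L^2(\Omega)}\le V\,2^k d^{k/2}$ almost surely.

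Next I would draw $n$ i.i.d.\ copies $h_1,\dots,h_n\sim p$ and set $f_n:=\frac1n\sum_{i=1}^n \|\mu\|\,s(h_i)\,h_i$. Since $\EE[f_n]=f$ and the summands are independent, the Hilbert-space variance computation gives
$$
\EE\|f-f_n\|_{L^2(\Omega)}^2 = \frac1n\,\EE\|G-f\|_{L^2(\Omega)}^2 \le \frac1n\,\EE\|G\|_{L^2(\Omega)}^2 \le \frac1n\,V^2\,2^{2k}d^{k}.
$$
Hence there exists at least one realization $h_1,\dots,h_n$ for which $\|f-f_n\|_{L^2(\Omega)}\le n^{-1/2}V\,2^k d^{k/2} = (1+\epsilon)2^k d^{k/2}n^{-1/2}\|f\|_{\PP_k}$, which is precisely the claimed estimate. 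Reading off $a_i := \|\mu\|\,s(h_i)/n$ together with $h_i(x)=\sigma_k(w_i\cdot x + b_i)$, where $\|w_i\|_2=1$ and $b_i\in[c_1,c_2]=[-\sqrt d,\sqrt d]$ gives $|b_i|\le\sqrt d$, I obtain $\sum_{i=1}^n|a_i| = \frac1n\cdot n\,\|\mu\| = \|\mu\|\le (1+\epsilon)\|f\|_{\PP_k}$, as required.

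The proof is mostly routine; the crux is the single variance estimate combined with the uniform dictionary bound \eqref{eq:norm_g}, which is exactly where the dimensionality factor $2^k d^{k/2}$ is produced from $-c_1=c_2=\diam(\Omega)=\sqrt d$. The only genuine subtlety I expect is the passage from the \emph{closure} appearing in the definition of $\|f\|_{\PP_k}$ to an exact integral representation $f=\int h\,\di\mu(h)$; this is absorbed into the $(1+\epsilon)$ factor via \cite[Lemma 3]{SX23}, after which the remaining argument is the standard Maurey sampling scheme.
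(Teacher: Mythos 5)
Your proposal is correct and follows essentially the same route as the paper's own proof: both invoke \cite[Lemma 3]{SX23} to pass from the convex-hull closure to an integral representation with $\|\mu\|_{\M(\PP_k)}\le(1+\epsilon)\|f\|_{\PP_k}$, normalize $\mu$ into a probability measure carrying the signs (you via polar decomposition, the paper via the Jordan decomposition $\mu=\mu_+-\mu_-$, which is the same construction), and then apply the standard Monte-Carlo variance bound together with the dictionary estimate \eqref{eq:norm_g} to produce the factor $2^k d^{k/2}$. The only cosmetic discrepancy is that you obtain $\sum_i|a_i|=\|\mu\|_{\M(\PP_k)}\le(1+\epsilon)\|f\|_{\PP_k}$ rather than the equality stated in the lemma, but the paper's own argument yields exactly the same inequality.
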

\begin{proof}
    According to \cite[Lemma 3]{SX23},
    there is a measure $\mu$ on $\PP_k$ such that $f=\int_{\PP_k}i_{\PP_k\to L^2(\Omega)}\di\mu$ where $i_{\PP_k\to L^2(\Omega)}$ is the identity map from $\PP_k$ to $L^2(\Omega)$, and the measure $\|\mu\|_{\M(\PP_k)}\le (1+\epsilon)\|f\|_{\PP_k}$.
    Decomposite $\mu=\mu_+-\mu_-$ with $\mu_+\ge 0$ and $\mu_-\ge 0$.
    Define $\bar{\mu}$ as a probability measure on $\{\pm 1\}\times \PP_{k}$ by
    $$ \bar{\mu}\left(A\right)=\|\mu\|_{\M(\PP_k)}^{-1} \left(\mu_+\left(\left\{ g\in\PP_{k}:\left(1,g\right)\in A    \right\}\right)+\mu_-\left(\left\{ g\in\PP_{k}:\left(-1,g\right)\in A   \right\}\right)  \right),$$
    for any measurable set $A\subset \{\pm 1\}\times \PP_k$.
    Then, we can rewrite the integral form of $f$ as
    $$f=\|\mu\|_{\M(\PP_{k})}\int_{\{\pm 1\}\times \PP_k} \nu g \di\bar{\mu}(\nu,g).$$
    Assume $\left\{(\nu_i,g_i)\right\}_{i=1}^{n}$ are \textit{i.i.d.} random variables with probability distribution $\bar{\mu}$ and $f_n=\|\mu\|_{\M(\PP_{k})}n^{-1}\sum_{i=1}^{n}\nu_ig_i$.
    Therefore,
    \begin{equation*}
        \begin{split}
            \EE \left[ \|f-f_n\|_{L^2(\Omega)}^2 \right]
            = & \EE \left[ \left\| \frac{1}{n}\sum_{i=1}^{n}f-\frac{1}{n}\sum_{i=1}^{n}\|\mu\|_{\M(\PP_{k})}\nu_ig_i \right\|_{L^2(\Omega)}^{2} \right]\\
            = & \frac{1}{n}\EE \left[\left\| f-\|\mu\|_{\M(\PP_{k})}\nu_1g_1\right\|_{L^2(\Omega)}^{2} \right].
        \end{split}
    \end{equation*}
    Besides, we have
    \begin{equation*}
        \begin{split}
            \EE\left[ \left\|\|\mu\|_{\M(\PP_{k})}\nu_1g_1 \right\|_{L^2(\Omega)}^2\right]
            = & \EE\left[ \left\| \left(\|\mu\|_{\M(\PP_{k})}\nu_1g_1 - f \right) + f \right\|_{L^2(\Omega)}^2\right]\\
            = & \EE\left[ \left\|\|\mu\|_{\M(\PP_{k})}\nu_1g_1-f\right\|_{L^2(\Omega)}^2 \right] + \|f\|_{L^2(\Omega)}^2 \\
            & \quad+ 2 \EE\left[ \left<\|\mu\|_{\M(\PP_{k})}\nu_1 g_1-f ,f\right>_{L^{2}(\Omega)} \right]
        \end{split}
    \end{equation*}
    and $\EE\left[ \left<\|\mu\|_{\M(\PP_{k})}\nu_1 g_1-f ,f\right>_{L^{2}(\Omega)} \right]=0$. Thus,
    \begin{equation}\label{eq:MC_varspaces}
        \begin{split}
            \EE \left[ \|f-f_n\|_{L^2(\Omega)}^2 \right]
            = & \frac{1}{n}\EE\left[ \left\|\|\mu\|_{\M(\PP_{k})}\nu_1g_1\right\|_{L^2(\Omega)}^2 \right]-\frac{1}{n}\|f\|_{L^2(\Omega)}^2\\
            \le & \frac{1}{n}\|\mu\|_{\M(\PP_{k})}^2\EE\left[ \|\nu_1g_1\|_{L^{2}(\Omega)}^2 \right]
            \le \frac{1}{n}\|\mu\|_{\M(\PP_{k})}^2\sup_{g\in \PP_{k}}\|g\|_{L^2(\Omega)}^{2}.
        \end{split}
    \end{equation}
    Thus, we again use $\sup_{g\in \PP_{k}}\|g\|_{L^2(\Omega)}^2\le 2^{2k}d^{k}$ in (\ref{eq:norm_g}) and, together with \eqref{eq:MC_varspaces}, we can find a realization of $f_n$ satisfying Lemma \ref{lem:approx_var_L2}.
\end{proof}
\begin{remark}
In both Lemmas \ref{lem:approx_Barron} and \ref{lem:approx_var_L2}, the convergence rate $n^{-\frac{1}{2}}$ with respect to the neuron number $n$ is not sharp. For instance,
    \cite[Theorem 9]{LLMP23} demonstrates a convergence rate of $\|f-f_n\|_{L^2(\Omega)}\lesssim n^{-\frac{1}{2}-\frac{1}{d}}$ for $f\in B_1^{k}$. Similarly,  \cite[Theorem 3]{SX22} establishes a rate of $\|f-f_n\|_{L^{2}(\Omega)}\lesssim n^{-\frac{1}{2}-\frac{2k+1}{2d}}$ for $f\in\K(\PP_k)$.
    However, it is noteworthy that these rates involve implied constants that are dependent on the dimensionality index $d$. More precisely, these constants are influenced by the diameter of the partitions within the bounded domain $\Omega$.

    We provide a brief explanation of the dependency of the constants on the dimensionality index $d$. Consider a bounded manifold in $\mathbb{R}^d$. When we divide this manifold into $n$ relatively equal-sized portions, the diameter of each portion scales as
    $\mathcal{O}(n^{\frac{1}{d}})$. However, for a fixed $n$, there exists a positive lower bound on the diameter of these portions, which is related to the dimensionality. Unfortunately, estimating the exact relationship between this lower bound and $d$ can be challenging. It is typically influenced by the geometry and properties of the manifold which is out of the scope of current work.
\end{remark}

\section{Regularization and error bound analysis}\label{se_error}
In this section, we conduct a comprehensive error bound analysis for the general regularization schemes (\ref{eq:min_problem}), considering various penalty terms.

We will frequently utilize the interpolation inequality, as elaborated in Theorem 5.2 of \cite{Adams_book}, throughout this section. It is worth noting that the aforementioned reference does not account for the dimensionality dependence. Therefore, we present the following lemma to emphasize the impact of the dimensionality index $d$ toward the interpolation inequality.

\begin{lemma}\label{lem:interpolation}
Assume that the domain $\Omega \subset \mathbb{R}^{d}$ satisfies the cone condition, meaning that there exists a cone 
\[
C_o = \left\{ x \in \mathbb{R}^{d} \mid x = 0 \textrm{ or } \left(0 \leq \|x\|_2 \leq \rho \textrm{ and } \angle(x, v) \leq \frac{\kappa}{2}\right) \right\},
\]
where $0$ is the vertex, $v \in \mathbb{R}^{d}$ is the axis direction, $\rho > 0$ is the height, and $0 < \kappa \leq \pi$ is the aperture angle. The cone condition stipulates that for every point $x \in \Omega$, there exists another cone $C_{x}$ with vertex at $x$, contained entirely within $\Omega$, and congruent to $C_o$.

Let $d, k \in \mathbb{N}$ with $0 \leq m \leq k$. For every $u \in W^{k,p}(\Omega)$, $p\in [1,\infty)$, the following interpolation inequality holds:
\begin{equation}\label{eq:interpolation}
    \|u\|_{W^{m,p}(\Omega)} \leq K(d, k, m,p) \|u\|_{W^{k,p}(\Omega)}^{\frac{m}{k}} \|u\|_{L^{p}(\Omega)}^{1 - \frac{m}{k}},
\end{equation}
where $K(d, k, m,p) > 0$ is a constant that depends on $d$, $k$, $m$ and $p$. Specifically, there exists a relationship such that
\begin{equation}\label{eq:K_dkm}
    K(d, k, m, p) \sim d^{c(k, m, p)},
\end{equation}
with $c(k, 0, p) = 0$ and a finite $c(k, m , p) > 0$ for $1 \leq m \leq k$, asymptotically as $d \to \infty$. 
\end{lemma}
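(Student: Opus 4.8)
The plan is to deduce the multiplicative inequality (\ref{eq:interpolation}) from an equivalent additive (``sum-form'') inequality and then to track the dimensional dependence solely through the additive constant. Concretely, I would first show that (\ref{eq:interpolation}) is equivalent, up to factors depending only on $k$ and $m$, to the family
$$\|u\|_{W^{m,p}(\Omega)} \le \epsilon\,\|u\|_{W^{k,p}(\Omega)} + K_0(d,k,m,p)\,\epsilon^{-\frac{m}{k-m}}\,\|u\|_{L^p(\Omega)},\qquad \epsilon\in(0,1].$$
One direction follows from the weighted Young inequality $a^{\theta}b^{1-\theta}\le \theta\,\delta\,a+(1-\theta)\,\delta^{-\theta/(1-\theta)}b$ with $\theta=m/k$, and the converse by minimising the right-hand side over $\epsilon$, which yields $K(d,k,m,p)\sim K_0(d,k,m,p)^{(k-m)/k}$. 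Hence it suffices to establish the additive form with $K_0$ growing polynomially in $d$. Note that the cases $m=0$ and $m=k$ are trivial, since one may take $K=1$; this already forces $c(k,0,p)=0$, and the substance of the statement is confined to $1\le m\le k-1$.

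For the additive form I would exploit the product structure of the cube rather than the generic cone-covering argument underlying Adams' Theorem~5.2, because the latter produces a constant governed by the reciprocal volume of an inscribed cone, a quantity that degrades exponentially in $d$ even when the aperture and height are kept fixed. The base ingredient is the genuinely one-dimensional interpolation inequality on $(0,1)$: for each coordinate $i$ and $0\le j\le k$,
$$\|\partial_i^{\,j}u\|_{L^p(\Omega)} \le \epsilon\,\|\partial_i^{\,k}u\|_{L^p(\Omega)} + C(j,k,p)\,\epsilon^{-\frac{j}{k-j}}\,\|u\|_{L^p(\Omega)},$$
obtained by applying the scalar interpolation inequality in the variable $x_i$ along each line segment and integrating the $p$-th power over the remaining variables by Fubini. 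The decisive point is that $C(j,k,p)$ here is dimension-independent, so the pure directional derivatives contribute no $d$-growth.

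The remaining and principal difficulty is to pass from these pure directional derivatives to the full collection of mixed partials $D^{\alpha}$ with $|\alpha|=m$, and to account for their number. For $p=2$ this is immediate by Plancherel together with the weighted AM--GM bound $|\xi^{\alpha}|^2\le\sum_i(\alpha_i/m)|\xi_i|^{2m}$, which dominates every mixed frequency symbol by pure ones; for general $p$ I would instead iterate the one-dimensional estimate coordinate by coordinate, trading each factor $\partial_i^{\alpha_i}$ against $\partial_i^{k}$ and order zero, and then collect the resulting cross terms. Summing $\|D^{\alpha}u\|_{L^p}^p$ over the $\binom{d+m-1}{d-1}\sim d^{m}/m!$ multi-indices of order at most $m$ contributes the only genuinely $d$-dependent factor, of size $\sim d^{m/p}$ after taking $p$-th roots, while the geometric and one-dimensional constants stay bounded in $d$. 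This yields $K_0(d,k,m,p)\sim d^{\tilde c(k,m,p)}$ with a finite positive exponent, and the reduction of the first paragraph converts this into $K(d,k,m,p)\sim d^{c(k,m,p)}$ as claimed. I expect the main obstacle to be exactly this general-$p$ control of the mixed derivatives together with the bookkeeping of the cross terms, since one must keep every constant polynomial in $d$ and thereby avoid the exponential blow-up inherent in the generic cone argument.
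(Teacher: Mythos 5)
Your overall architecture---reducing the multiplicative inequality \eqref{eq:interpolation} to an additive $\epsilon$-form via Young's inequality and then tracking the $d$-dependence of the additive constant---matches the paper, which likewise proves the sum form $|u|_{m,p}\le K^{*}(\epsilon'|u|_{k,p}+\epsilon'^{-m/(k-m)}\|u\|_{L^{p}})$. But your route to the additive constant has a genuine gap at precisely the point you yourself flag as the ``main obstacle'': controlling the mixed partials $D^{\alpha}u$ with $|\alpha|=m$ by the pure directional derivatives $\partial_i^{k}u$ and $\|u\|_{L^{p}}$ for general $p\in[1,\infty)$ is never carried out, and it is not routine. On a bounded domain the Plancherel argument you invoke for $p=2$ requires an extension operator (whose norm itself depends on $d$) or periodicity; for general $p$ the analogous multiplier estimate is delicate and can fail at $p=1$; and ``collecting the cross terms'' from iterating the one-dimensional estimate is exactly where the claimed polynomial bound would have to be produced. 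Since all of the asserted $d$-dependence in \eqref{eq:K_dkm} is supposed to be concentrated in this step, leaving it open means the proposal does not establish the lemma. A secondary mismatch: the Fubini-along-coordinate-lines argument exploits the product structure of the cube, whereas the lemma is stated for an arbitrary domain satisfying the cone condition, where coordinate segments through points near the boundary need not have a uniformly long intersection with $\Omega$.

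The premise for abandoning the cone construction is also not correct. The paper's proof follows Adams's cone argument and shows that, tracked carefully, it yields a \emph{polynomial} constant: the spherical-cap measure $\int_{\Sigma_{x,u}}\di\beta$ enters both the normalizing constant $K_1$ and the subsequent integral over directions and cancels, so no reciprocal cone volume survives. The only $d$-growth comes from expanding the directional second derivative $D_t^{2}u=\sum_{i,j}\beta_i\beta_j D_{ij}^{2}u$ into its $O(d^{2})$ mixed partials via the crude bound $\left|\sum_{n=1}^{N}a_n\right|^{p}\le N^{p}\sum_{n=1}^{N}|a_n|^{p}$, together with summing over the $d$ coordinates, giving $K_2=\mathcal{O}(d^{2+\frac{1}{p}})$ for $m=1$, $k=2$, followed by induction in $k$. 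In other words, the cone argument is the mechanism by which the paper simultaneously handles general cone-condition domains and the mixed-derivative bookkeeping that your approach leaves unresolved.
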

\begin{proof}
If $m=0$, the result is trivial. For $1 \leq m \leq k$,
the key ingredient of the proof lies in tracing the dependence of the dimensionality index through various Sobolev norms. Below, we prove a simplest case of (\ref{eq:interpolation}) as an illustration for the seminorm such that
\begin{equation}\label{eq:inter_simple}
    \left|u\right|_{m,p}\le K^{*}\left(\epsilon'\left|u\right|_{k,p} + \epsilon'^{-\frac{m}{k-m}}\left\|u\right\|_{L^{p}}\right).
\end{equation}

To this end, we assume that $u\in C^{\infty}(\Omega)$. Denote $\Sigma=\left\{\beta\in \R^{d}\mid \left\|\beta\right\|_2=1\right\}$. For every $x\in \Omega$, let $C_x$ be the cone that satisfies the cone condition with vertex $x$, axis direction $v_x$, height $\rho_0$, and aperture angle $\kappa$ and $\Sigma_{x} = \left\{\beta \in \Sigma\mid \angle\left(\beta,v_x\right)\le \frac{\kappa}{2}\right\}$. 
Without loss of generality, we let $\beta \in \Sigma_{x}$. Then, according to \cite[Lemma 5.4]{Adams_book}, for every $0<\rho\le \rho_0$, there holds
\begin{equation}\label{eq:K_p}
    \left|\beta\cdot \grad u(x)\right|^{p}\le \frac{K_p}{\rho}I\left(\rho,p,u,x,\beta\right),
\end{equation}
where $K_{p}=2^{p-1}9^{p}$, and 
\[
    I\left(\rho,p,u,x,\beta\right)=\rho^{p}\int_{0}^{\rho}\left|D_{t}^{2}u\left(x+t\beta\right)\right|^{p}\di t + \rho^{-p}\int_{0}^{\rho}\left|u\left(x+t\beta\right)\right|^{p}\di t. 
\]
    
Furthermore, there is a cone $C_{x,u}$ with vertex $x$, axis direction $v_{x,u}$, and aperture angle $\frac{\kappa}{3}$, and the $\Sigma_{x,u} = \left\{\beta\in \Sigma\mid \angle\left(\beta,v_{x,u}\right)\le \frac{\kappa}{6}\right\}\subset \Sigma_{x}$. In particular, there holds $\angle\left(\beta, \grad u(x)\right)\le \frac{\pi}{2}-\frac{\kappa}{6}$ for every $\beta \in \Sigma_{x,u}$. Let $K_1 = \cos^{p}\left(\frac{\pi}{2}-\frac{\kappa}{6}\right)\int_{\Sigma_{x,u}}\di \beta$. Then 
    \begin{equation}\label{eq:K_1}
        \int_{\Sigma_{x,u}}\left|\beta\cdot \grad u(x)\right|^{d}\di \beta \ge K_1\left|\grad u(x)\right|^{p}.
    \end{equation}
We combine \eqref{eq:K_p} and \eqref{eq:K_1} to obtain 
    \begin{equation*}
        \begin{split}
            \int_{\Omega}\left|\grad u(x)\right|^{p}\di x \le & \int_{\Omega} \frac{1}{K_1}\int_{\Sigma_{x,u}}\left|\beta\cdot \grad u(x)\right|^{p}\di \beta \di x\\
            \le & \frac{K_{p}}{\rho K_1}\int_{\Sigma_{x,u}}\int_{\Omega}I\left(\rho,p,u,x,\beta\right)\di x \di \beta.
        \end{split}
    \end{equation*}
When we fix $\beta = e_d=\begin{bmatrix}
        0& \cdots & 0 & 1
    \end{bmatrix}$, it has been proved in \cite{Adams_book} that  
    \[
    \int_{\Omega}I\left(\rho,p,u,x,e_d\right)\di x \le \rho\int_{\Omega} \left(\rho^{p}\left|D_{d}^{2}u(x)\right|^{p}+\rho^{-p}\left|u(x)\right|^{p}\right)\di x.
    \]
    Then, for every $\beta\in \Sigma$, there holds
    \begin{equation*}
        \begin{split}
            \int_{\Omega}I\left(\rho,p,u,x,\beta\right)\di x \le & \rho\int_{\Omega}\left(\rho^{p}\left|\sum_{j=1}^{d}\sum_{i=1}^{d}\beta_i\beta_j D_{j,i}^{2}u(x)\right|^{p} + \rho^{-p}\left|u(x)\right|^{p}\right)\di x\\
            \le & \rho\int_{\Omega} \left(\rho^{p}\left|2\sum_{|\alpha|=2}D^{\alpha}u(x)\right|^p + \rho^{-p}\left|u(x)\right|^{p}\right) \di x\\
            \le & \rho \int_{\Omega}\left( 2^{p}\left(\frac{(d+1)d}{2}\right)^{p}\sum_{|\alpha|=2}\left|D^{\alpha}u(x)\right|^{p} + \rho^{-p}\left|u(x)\right|^{p} \right) \di x\\
            \le & (d+1)^{p}d^{p}\rho\left(\rho^{p}\left|u\right|_{2,p}^{p} + \rho^{-p}\left\|u\right\|_{L^{p}}^{p}\right),
        \end{split}
    \end{equation*}
    where the third line is due to $\left|\sum_{n=1}^{N}a_n\right|^{p}\le N^{p}\sum_{n=1}^{N}\left|a_n\right|^{p}$ when $N\in\N$ and $a_n\in \R$ for every $1\le n\le N$.

Next, considering that $\left|D_{j}(u)\right|\le \left|\grad u\right|$ for every index $j=1,\cdots,d$, we have 
    \[
    \left|u\right|^p_{1,p}\le \frac{K_p(d+1)^{p}d^{p+1}}{\cos^{p}\left(\frac{\pi}{2}-\frac{\kappa}{6}\right)}\left(\rho^{p}\left|u\right|_{2,p}^{p}+\rho^{-p}\left\|u\right\|_{L^{p}}^{p}\right).
    \]
Finally, with the density of $C^{\infty}(\Omega)$ in $W^{2,p}(\Omega)$, we have 
    \begin{equation}\label{eq:inter_m1k2}
        \left|u\right|_{1,p}\le K_2\left(\rho\left|u\right|_{2,p} + \rho^{-1}\left\|u\right\|_{L^{p}}\right),
    \end{equation}
where $K_2=\mathcal{O}\left(d^{2+\frac{1}{p}}\right)$. 

One then can complete the proof along the line of \cite[Section 5.2]{Adams_book} by induction. Though it is hard to explicitly calculate the $c(k,m,p)$, one can show that such a value is finite since $k$ is finite and $p\in [1,\infty)$.
\end{proof}

In the rest of this section, we will frequently use the notations $K(d,k,m):=K(d,k,m,2)$ and $c(k,m) := c(k,m,2)$ to denote the influence induced by the interpolation. It is worth mentioning that if we consider the domain to be a torus, then the constant $K(d,k,m,p)$ simplifies to 1, as can be observed through Fourier analysis. We omit the details here for brevity.

\subsection{Regularization with the extended Barron norm penalty}
Extensive research on regularization using the extended Barron norm penalty has been conducted in \cite{LLMP23}. By carefully calibrating the dependency on the dimensionality index $d$ and recalling the Tikhonov functional
\begin{align}\label{eq:Tik_extendedRadon}
   J_{\lambda}(g): = \left\|g-f^{\delta}\right\|_{L^2(\Omega)}^2 + \lambda \left( \frac{1}{n}\sum_{i=1}^{n}|a_i|\left(\|w_i\|_1+|b_i|\right)^{k} \right)^{2}, \qquad g\in F_n,
\end{align}
with $F_n=\left\{\frac{1}{n}\sum_{i=1}^{n}a_i\sigma_{k}\left(w_i\cdot x + b_i\right)\mid a_i\in\R, w_i\in \R^d, b_i\in \R\right\}$,
we introduce the refined error bound presented below.

\begin{theorem}\label{thm:Barron_regularization}
    Let $\Omega=(0,1)^{d}$, and $f\in B_1^{k}(\Omega)$.
    Recall the noise level $\delta$ in (\ref{eq:noisymeasurement}).
    Denote $f_{n,\lambda}^{\delta}$ as the minimizer of the Tikhonov functional (\ref{eq:Tik_extendedRadon}) and choose the regularization parameter $\lambda$ as $\delta + C(k) n^{-\frac{1}{2}} \|f\|_{B_1^k(\Omega)} =  \sqrt{\lambda} \|f\|_{B_1^k(\Omega)}$ with the constant $C(k)$ from Lemma \ref{lem:approx_Barron},
    there holds
    $$
    \|f_{n,\lambda}^{\delta}-f\|_{H^{m}(\Omega)}= \mathcal{O}(d^{\frac{m}{2}+c(k,m)}(\delta+n^{-\frac{1}{2}})^{\frac{k-m}{k}}),
    $$
    as $d\rightarrow \infty$.
    The implied constant is only related to $k$, $m$ and $\|f\|_{B_{1}^{k}(\Omega)}$.
\end{theorem}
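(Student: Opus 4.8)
The plan is to interpolate the target $H^m$-error between a dimension-free $L^2$-estimate and a dimension-dependent $H^k$-estimate via Lemma~\ref{lem:interpolation}. Writing $u := f_{n,\lambda}^{\delta}-f$, the goal is to control $\|u\|_{L^2(\Omega)}$ and $\|u\|_{H^k(\Omega)}$ separately and then combine them through the interpolation inequality \eqref{eq:interpolation}, recalling $K(d,k,m)\sim d^{c(k,m)}$.

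First I would exploit the minimality of $f_{n,\lambda}^{\delta}$. For each $\epsilon>0$, Lemma~\ref{lem:approx_Barron} supplies a competitor $f_n\in F_n$ with $\|f-f_n\|_{L^2(\Omega)}\le C(k)(1+\epsilon)n^{-1/2}\|f\|_{B_1^k(\Omega)}$ and penalty $\frac1n\sum_i|a_i|(\|w_i\|_1+|b_i|)^k\le(1+\epsilon)\|f\|_{B_1^k(\Omega)}$. Using $\|f_n-f^\delta\|_{L^2(\Omega)}\le\delta+C(k)(1+\epsilon)n^{-1/2}\|f\|_{B_1^k(\Omega)}$ together with the parameter choice $\delta+C(k)n^{-1/2}\|f\|_{B_1^k(\Omega)}=\sqrt{\lambda}\|f\|_{B_1^k(\Omega)}$, and letting $\epsilon\to0$, both summands of $J_\lambda(f_n)$ are $\mathcal{O}(\lambda\|f\|_{B_1^k(\Omega)}^2)$. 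Then $J_\lambda(f_{n,\lambda}^\delta)\le J_\lambda(f_n)$ gives simultaneously $\|f_{n,\lambda}^\delta-f^\delta\|_{L^2(\Omega)}\lesssim\sqrt{\lambda}\|f\|_{B_1^k(\Omega)}$ and that the inner penalty of the minimizer is $\lesssim\|f\|_{B_1^k(\Omega)}$. Since the parameter choice forces $\delta\le\sqrt{\lambda}\|f\|_{B_1^k(\Omega)}\sim\delta+n^{-1/2}$, a triangle inequality yields the dimension-free bound $\|u\|_{L^2(\Omega)}\lesssim\delta+n^{-1/2}$.

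Next I would bound $\|u\|_{H^k(\Omega)}$. The key observation is that for a finite network the penalty $\frac1n\sum_i|a_i|(\|w_i\|_1+|b_i|)^k$ dominates its extended Barron norm (represent the network by the discrete measure on its nodes as in Definition~\ref{def:extendedBarron}), so the previous step gives $\|f_{n,\lambda}^\delta\|_{B_1^k(\Omega)}\lesssim\|f\|_{B_1^k(\Omega)}$. Applying the embedding Lemma~\ref{lem_barronsobolev} (item~\ref{enm:sobolev_barran} of Theorem~\ref{thm:sobolev_3spaces}) with $m=k$ to both $f_{n,\lambda}^\delta$ and $f$, and recalling $C(d,k,k)\sim d^{k/2}$, I obtain $\|u\|_{H^k(\Omega)}\lesssim d^{k/2}\|f\|_{B_1^k(\Omega)}$. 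Feeding $\|u\|_{L^2(\Omega)}\lesssim\delta+n^{-1/2}$ and $\|u\|_{H^k(\Omega)}\lesssim d^{k/2}$ into \eqref{eq:interpolation} produces
\[
\|u\|_{H^m(\Omega)}\lesssim d^{c(k,m)}\,\big(d^{k/2}\big)^{m/k}\,(\delta+n^{-1/2})^{1-\frac{m}{k}}=d^{m/2+c(k,m)}(\delta+n^{-1/2})^{\frac{k-m}{k}},
\]
which is the asserted rate.

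The main obstacle is not any single estimate but the careful bookkeeping of the dimensional exponents across three independent sources: the $L^2$ approximation, which must be kept strictly dimension-free (this relies on the refined estimate \eqref{eq:L2_B1k} behind Lemma~\ref{lem:approx_Barron}); the $d^{k/2}$ growth of the $H^k$-embedding constant; and the interpolation constant $d^{c(k,m)}$, whose exponent $c(k,m)$ is only guaranteed finite and positive by Lemma~\ref{lem:interpolation}. Combining these correctly — in particular recognizing the $\big(d^{k/2}\big)^{m/k}=d^{m/2}$ contraction produced by the interpolation exponent — is what yields the clean final exponent $m/2+c(k,m)$, while the remaining manipulations are routine.
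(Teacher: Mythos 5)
Your proposal is correct and follows essentially the same route as the paper: compare $J_\lambda$ at the minimizer with the competitor $f_n$ from Lemma~\ref{lem:approx_Barron} to get a dimension-free $L^2$ bound and a penalty bound, observe that the penalty (via the discrete representing measure) dominates the extended Barron norm of the minimizer so that Lemma~\ref{lem_barronsobolev} yields $\|u\|_{H^k(\Omega)}\lesssim d^{k/2}\|f\|_{B_1^k(\Omega)}$, and then interpolate. The bookkeeping of the exponents, including the $\bigl(d^{k/2}\bigr)^{m/k}=d^{m/2}$ contraction, matches the paper's computation exactly.
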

\begin{proof}
    We denote $f_{n,\lambda}^{\delta} = \frac{1}{n}\sum_{i=1}^{n}a_i^{*}\sigma_{k}\left(w_i^{*}\cdot x + b_{i}^{*}\right)$ and define $f_n$ to be identical to the one in Lemma \ref{lem:approx_Barron}. It holds that
    \begin{equation*}
        \begin{split}
            \left\| f^{\delta}-f_{n,\lambda}^{\delta} \right\|_{L^2(\Omega)}
            & \le \|f^{\delta} - f\|_{L^{2}(\Omega)} + \|f-f_n\|_{L^2(\Omega)} + \frac{\sqrt{\lambda}}{n}\sum_{i=1}^{n}|a_i|\left( \|w_i\|_1 + |b_i| \right)^{k} \\
            & \le \delta + (1+\epsilon)C(k)n^{-\frac{1}{2}}\|f\|_{B_{1}^{k}(\Omega)} + (1+\epsilon)\sqrt{\lambda}\|f\|_{B_1^{k}(\Omega)}\\
            & \le 2 (1+\epsilon)\left(\delta + C(k)n^{-\frac{1}{2}}\|f\|_{B_{1}^{k}(\Omega)} \right),
        \end{split}
    \end{equation*}
    since $\delta + C(k) n^{-\frac{1}{2}} \|f\|_{B_1^k} = \sqrt{\lambda} \|f\|_{B_1^k}$. Thus, we derive
    \[
    \|f-f_{n,\lambda}^{\delta}\|_{L^2(\Omega)}\le \|f-f^{\delta}\|_{L^2(\Omega)} + \left\| f^{\delta}-f_{n,\lambda}^{\delta} \right\|_{L^2(\Omega)}\le 3(1+\epsilon)\left(\delta + C(k)n^{-\frac{1}{2}}\|f\|_{B_{1}^{k}(\Omega)} \right).
    \]
    Similarly, we derive
    \begin{equation*}
        \begin{split}
            \|f_{n,\lambda}^{\delta}\|_{B_1^{k}(\Omega)}& \le \frac{1}{n}\sum_{i=1}^{n}|a_{i}^{*}|\left( \|w_i^*\|_1+|b_i^*| \right)^{k}\\
            & \le 2 \frac{(1+\epsilon)\left(\delta + C(k)n^{-\frac{1}{2}}\|f\|_{B_{1}^{k}(\Omega)} \right)}{\sqrt{\lambda}} \le 2(1+\epsilon) \|f\|_{B_{1}^{k}(\Omega)}.
        \end{split}
    \end{equation*}
    Furthermore, by \ref{enm:sobolev_barran} in Theorem \ref{thm:sobolev_3spaces}, we can further deduce that
    \begin{equation*}
        \begin{split}
            \|f-f_{n,\lambda}^{\delta}\|_{H^{k}(\Omega)}
            & \le \|f\|_{H^{k}(\Omega)} + \|f_{n,\lambda}^{\delta}\|_{H^{k}(\Omega)} \le C(d,k,k)\left(\|f\|_{B_{1}^{k}(\Omega)}+\|f_{n,\lambda}^{\delta}\|_{B_{1}^{k}(\Omega)}\right)\\
            & \le 3(1+\epsilon)C(d,k,k)\|f\|_{B_{1}^{k}(\Omega)}.
        \end{split}
    \end{equation*}
    By applying the interpolation \eqref{eq:interpolation}, we obtain
    \begin{equation*}
        \begin{split}
            \|f-f_{n,\lambda}^{\delta}\|_{H^{m}(\Omega)}
            \le &K(d,k,m)\|f-f_{n,\lambda}^{\delta}\|_{H^{k}(\Omega)}^{\frac{m}{k}} \|f-f_{n,\lambda}^{\delta}\|_{L^{2}(\Omega)}^{\frac{k-m}{k}}\\
            \le &3 (1+\epsilon)K(d,k,m)\left(C(d,k,k)\right)^{\frac{m}{k}} \\
            &\times \left(\delta + C(k)n^{-\frac{1}{2}}\|f\|_{B_{1}^{k}(\Omega)}\right)^{\frac{k-m}{k}} \|f\|_{B_{1}^{k}(\Omega)}.
        \end{split}
    \end{equation*}
    Observing that $C(d,k,k)\sim d^{\frac{k}{2}}$ when $d\to \infty$, \eqref{eq:K_dkm}, and the arbitrariness of $\epsilon$, we have proven this theorem.
\end{proof}

\subsection{Regularization with the variation norm penalty}

Similarly, we can conduct an error-bound analysis for the regularization scheme (\ref{eq:min_problem}), incorporating a variation norm penalty.
\begin{theorem}\label{thm:vs_regularization}
    Let $\Omega=(0,1)^{d}$ and $\PP_k$ be the dictionary in Definition \ref{def:variation_space} with $-c_1=c_2=\sqrt{d}$. Assume that $f\in\K(\PP_k)$ and $\|f-f^\delta\|_{L^2(\Omega)}\le \delta$.
    Consider the Tikhonov functional
    $$\tilde{J}_{\lambda}(g):=\|g-f^{\delta}\|_{L^2(\Omega)}^2 + \lambda\left(
\sum_{j=1}^{n}|a_j| \right)^2, \quad g\in\tilde{F}_n,$$
    with $\tilde{F}_n=\left\{ \sum_{j=1}^{n}a_jh_j\mid a_j\in\R, h_j\in \PP_k \right\}$.
    There exists a minimizer $f^{\delta,\lambda,n}$ of $\tilde{J}_{\lambda}$ with a regularization parameter $\lambda$ as $\delta + 2^k d ^{\frac{k}{2}}n^{-\frac{1}{2}}\|f\|_{\PP_k} = \sqrt{\lambda}\|f\|_{\PP_k}$ that satisfies
    the error bound
    \[\|f-f^{\delta,\lambda,n}\|_{H^m(\Omega)} = \mathcal{O}\left( d^{\frac{m}{2}+c(k,m)}\left(\delta + d^{\frac{k}{2}}n^{-\frac{1}{2}}\right)^{\frac{k-m}{k}} \right),\]
    as $d\to \infty$. 
    The implied constant is only related to $k$, $m$ and $\|f\|_{\PP_{k}}$.
\end{theorem}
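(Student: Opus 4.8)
The plan is to mirror the argument of Theorem \ref{thm:Barron_regularization} verbatim, substituting the variation-space ingredients for their extended-Barron counterparts: Lemma \ref{lem:approx_var_L2} replaces the $L^2$-approximation bound of Lemma \ref{lem:approx_Barron}, and item \ref{enm:sobolev_var} of Theorem \ref{thm:sobolev_3spaces} replaces item \ref{enm:sobolev_barran}. The only genuinely new bookkeeping is to keep track of where the extra dimensional factor $d^{\frac{k}{2}}$ from the variation approximation rate enters, ensuring it lands inside the interpolation exponent rather than in the prefactor.

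First I would fix the comparison network: let $f_n=\sum_{i=1}^{n}a_i\sigma_k(w_i\cdot x+b_i)$ be the function produced by Lemma \ref{lem:approx_var_L2}, which satisfies $\|f-f_n\|_{L^2(\Omega)}\le(1+\epsilon)2^k d^{\frac{k}{2}}n^{-\frac12}\|f\|_{\PP_k}$ together with $\sum_{i=1}^{n}|a_i|=(1+\epsilon)\|f\|_{\PP_k}$. Writing the minimizer as $f^{\delta,\lambda,n}=\sum_{j=1}^{n}a_j^{*}h_j$, the inequality $\tilde{J}_\lambda(f^{\delta,\lambda,n})\le\tilde{J}_\lambda(f_n)$ yields simultaneously a data-fidelity bound and a penalty bound, both with the same right-hand side $\|f_n-f^\delta\|_{L^2(\Omega)}^2+\lambda(\sum_i|a_i|)^2$. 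Applying $\sqrt{s^2+t^2}\le s+t$, the triangle inequality $\|f_n-f^\delta\|_{L^2(\Omega)}\le\delta+\|f-f_n\|_{L^2(\Omega)}$, and the calibrated choice $\delta+2^k d^{\frac{k}{2}}n^{-\frac12}\|f\|_{\PP_k}=\sqrt{\lambda}\|f\|_{\PP_k}$, both inequalities collapse to $\|f^{\delta,\lambda,n}-f^\delta\|_{L^2(\Omega)}\le 2(1+\epsilon)\sqrt{\lambda}\|f\|_{\PP_k}$ and $\sqrt{\lambda}\sum_j|a_j^{*}|\le 2(1+\epsilon)\sqrt{\lambda}\|f\|_{\PP_k}$. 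A final triangle inequality against $\|f-f^\delta\|_{L^2(\Omega)}\le\delta$ then gives $\|f-f^{\delta,\lambda,n}\|_{L^2(\Omega)}\le 3(1+\epsilon)(\delta+2^k d^{\frac{k}{2}}n^{-\frac12}\|f\|_{\PP_k})$.

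The crucial bridge to the $H^m$-estimate is the observation that the explicit representation $f^{\delta,\lambda,n}=\sum_j a_j^{*}h_j$ with $h_j\in\PP_k$ forces $\|f^{\delta,\lambda,n}\|_{\PP_k}\le\sum_j|a_j^{*}|\le 2(1+\epsilon)\|f\|_{\PP_k}$, simply because the variation norm is defined as an infimum over representations. Feeding this into item \ref{enm:sobolev_var} of Theorem \ref{thm:sobolev_3spaces} controls the top-order Sobolev norm, $\|f-f^{\delta,\lambda,n}\|_{H^k(\Omega)}\le\tilde{c}(d,k,k)(\|f\|_{\PP_k}+\|f^{\delta,\lambda,n}\|_{\PP_k})\le 3(1+\epsilon)\tilde{c}(d,k,k)\|f\|_{\PP_k}$. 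I would then interpolate via Lemma \ref{lem:interpolation} with $p=2$, combining the $L^2$ and $H^k$ bounds through $\|f-f^{\delta,\lambda,n}\|_{H^m(\Omega)}\le K(d,k,m)\|f-f^{\delta,\lambda,n}\|_{H^k(\Omega)}^{\frac{m}{k}}\|f-f^{\delta,\lambda,n}\|_{L^2(\Omega)}^{1-\frac{m}{k}}$, and let $\epsilon\to0$.

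The concluding step is purely asymptotic bookkeeping. Since $\tilde{c}(d,k,k)\sim d^{\frac{k}{2}}$ we get $\tilde{c}(d,k,k)^{\frac{m}{k}}\sim d^{\frac{m}{2}}$, and combined with $K(d,k,m)\sim d^{c(k,m)}$ from \eqref{eq:K_dkm} the prefactor is $\mathcal{O}(d^{\frac{m}{2}+c(k,m)})$, while the $d^{\frac{k}{2}}$ coming from the variation $L^2$-rate remains inside the factor $(\delta+d^{\frac{k}{2}}n^{-\frac12})^{\frac{k-m}{k}}$, which is exactly the claimed bound. I expect no serious obstacle here, since the skeleton is identical to Theorem \ref{thm:Barron_regularization}; the one point demanding genuine care is separating the two dimension-dependent constants $2^k d^{\frac{k}{2}}$ (approximation rate) and $\tilde{c}(d,k,k)$ (embedding) so that the former is carried by the interpolation exponent $\frac{k-m}{k}$ rather than being absorbed into the $H^k$ prefactor, which is precisely what distinguishes this estimate from the dimension-lighter extended-Barron case.
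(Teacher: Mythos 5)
Your proposal follows essentially the same route as the paper's own proof: the comparison network from Lemma \ref{lem:approx_var_L2}, the two bounds extracted from $\tilde{J}_\lambda(f^{\delta,\lambda,n})\le\tilde{J}_\lambda(f_n)$, the observation that $\|f^{\delta,\lambda,n}\|_{\PP_k}\le\sum_j|a_j^*|$ because the variation norm is an infimum over representations, item \ref{enm:sobolev_var} of Theorem \ref{thm:sobolev_3spaces} for the $H^k$ bound, and the interpolation of Lemma \ref{lem:interpolation}; the bookkeeping that keeps $2^kd^{\frac{k}{2}}$ inside the $(\cdot)^{\frac{k-m}{k}}$ factor and sends $\tilde{c}(d,k,k)^{\frac{m}{k}}\sim d^{\frac{m}{2}}$ and $K(d,k,m)\sim d^{c(k,m)}$ into the prefactor is exactly what the paper does. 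The one step you skip is the existence of the minimizer, which the theorem explicitly asserts: the paper disposes of this by noting that the search space is compact, since $w_j\in\Sf^{d-1}$, $b_j\in[c_1,c_2]$, and the penalty forces $\sum_{j}|a_j|\le\lambda^{-\frac{1}{2}}\|f^\delta\|_{L^2(\Omega)}$; you should add that one sentence before writing ``the minimizer $f^{\delta,\lambda,n}=\sum_j a_j^*h_j$''.
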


\begin{proof}
First, we show that the minimizer set is well-defined. Recall that $h_j(x) = \sigma_{k}\left(w_j\cdot x+b_j\right)$ is parameterized by $w_j\in\Sf^{d-1}$ and $b_j\in[c_1,c_2]$. Meanwhile, the coefficient $a_j$ is subject to the constraint $\sum_{j=1}^{n}|a_j|\le \lambda^{-\frac{1}{2}}\|f^{\delta}\|_{L_2(\Omega)}$ when $\lambda>0$.
    Thus, the minimizer set $\left\{a_j^*,h_j^*\right\}_{j=1}^{n}$ is reachable since the search space of $\{a_j,w_j,b_j\}_{j=1}^{n}$ is compact.

Then, similar to the proof of Theorem \ref{thm:Barron_regularization}, with the help of $f_n$ in Lemma \ref{lem:approx_var_L2}, we obtain following error bounds
    \begin{displaymath}
    \|f-f^{\delta,\lambda,n}\|_{L^{2}(\Omega)} \le 3(1+\epsilon)\left[\delta + 2^k d ^{\frac{k}{2}} n^{-\frac{1}{2}}\|f\|_{\PP_k}\right].
    \end{displaymath}
    Together with \ref{enm:sobolev_var} in Theorem \ref{thm:sobolev_3spaces}, we also have
    \[
        \|f-f^{\delta,\lambda,n}\|_{H^k(\Omega)} \le 3(1+\epsilon) \tilde{c}(d,k,k)\|f\|_{\PP_{k}}.
    \]
We thus end the proof by implementing the interpolation inequality and noticing $\tilde{c}(d,k,k)\sim d^{\frac{k}{2}}$ when $d\to \infty$,\eqref{eq:interpolation}, \eqref{eq:K_dkm}, and the arbitrariness of $\epsilon$.
\end{proof}

\subsection{Regularization with the Radon-BV seminorm penalty}

The third subsection is devoted to the analysis of the error bound when the penalty term is chosen as the Radon-BV seminorm, where
we make use of the approximation rate of finite neurons in the previous two subsections.
We shall mention that \cite{PN23} shows a representation theorem and an approximation rate for functions in Radon-BV spaces when $k=1$, which is related to $\relu$ activation functions.
However, as far as we know, there is no approximation result for functions in Radon-BV spaces and their derivatives when $k> 1$.
So, compared to the preceding two subsections, the focus in this subsection shifts to identifying a minimizer for the following functional:
$$
L(g) := \|f^{\delta}-g\|_{L^2(\Omega)}^2 + \lambda |g|_{\F_{k+1}(\Omega)}^2.
$$
Since $\|\cdot\|_{L^2(\Omega)}+|\cdot|_{\F_{k+1(\Omega)}}$ is a norm on $\F_{k+1}(\Omega)$, the unit norm ball in $\F_{k+1}(\Omega)$ is weakly closed, as stated in \cite[Theorem 11.5]{Clason2020}. However, despite the fact that the functional $L(\cdot)$ is weakly lower semicontinuous, and the norm-bounded ball of $\F_{k+1}(\Omega)$ is both convex and closed, the existence of a minimizer for $L(\cdot)$ relies on the reflexivity of $\F_{k+1}(\Omega)$, referring to \cite[Theorem 11.10]{Clason2020}. Unfortunately, it remains unclear whether $\F_{k+1}(\Omega)$ is a reflexive space or not.

To overcome this limitation, we choose a small positive real number $\varepsilon > 0$ and seek a solution $f_{\lambda}^{\delta,\varepsilon}$ satisfying
\begin{align}\label{eq:Tik_RadonBV}
L(f_{\lambda}^{\delta,\varepsilon}) \le L(g)+ \varepsilon,\quad \forall g\in \F_{k+1}(\Omega).
\end{align}
The error bound analysis is summarized below.

\begin{theorem}\label{thm:RadonBV_regularization}
    Let $\Omega = (0,1)^{d}$. Assume that $f\in \F_{k+1}(\Omega) \cap L^2(\Omega)$, with $\|f-f^{\delta}\|_{L^2(\Omega)}\le \delta$ and $f_{\lambda}^{\delta,\varepsilon} \in \F_{k+1}(\Omega)$ satisfies (\ref{eq:Tik_RadonBV}). Choose the regularization parameter
    $\lambda$ as
    $ \sqrt{\lambda} |f|_{\F_{k+1}(\Omega)} = \delta + \sqrt{ \varepsilon}$.
    Then, there holds
    $$
    \|f-f_{\lambda}^{\delta,\varepsilon}\|_{H^{m}(\Omega)} = \mathcal{O}\left( d^{\frac{m}{2}+c(k,m)}\left(1+2^{k}d^{\frac{k}{2}}M(d)+k!M(d)\right)^{\frac{m}{k}}\left(\delta+\sqrt{\varepsilon}\right)^{\frac{k-m}{k}} \right),
    $$
    asymptotically as $d\to \infty$.
    The implied constant is only related to $k$, $m$, $\|f\|_{L^2(\Omega)}$, and $|f|_{\F_{k+1}(\Omega)}$.
\end{theorem}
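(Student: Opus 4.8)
The plan is to follow the template of the proofs of Theorems~\ref{thm:Barron_regularization} and~\ref{thm:vs_regularization}, namely bound the $L^2$ error and the top-order ($H^k$) seminorm separately and then interpolate, but with one essential modification: since no finite-neuron approximation result is available for $\F_{k+1}(\Omega)$ when $k>1$, I cannot introduce an auxiliary network $f_n$ as a comparison function. Instead, I would exploit that the target $f$ itself lies in $\F_{k+1}(\Omega)$ and is therefore an admissible competitor in the minimization of $L(\cdot)$. Inserting $g=f$ into the $\varepsilon$-approximate optimality condition~(\ref{eq:Tik_RadonBV}) gives $L(f_{\lambda}^{\delta,\varepsilon})\le L(f)+\varepsilon = \|f^{\delta}-f\|_{L^2(\Omega)}^2 + \lambda|f|_{\F_{k+1}(\Omega)}^2 + \varepsilon \le \delta^2 + \lambda|f|_{\F_{k+1}(\Omega)}^2 + \varepsilon$. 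With the parameter choice $\sqrt{\lambda}|f|_{\F_{k+1}(\Omega)} = \delta+\sqrt{\varepsilon}$, so that $\lambda|f|_{\F_{k+1}(\Omega)}^2=(\delta+\sqrt{\varepsilon})^2$, this right-hand side is at most $2(\delta+\sqrt{\varepsilon})^2$.

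Reading off the two nonnegative terms of $L(f_{\lambda}^{\delta,\varepsilon})$ separately, I would obtain $\|f^{\delta}-f_{\lambda}^{\delta,\varepsilon}\|_{L^2(\Omega)}\le\sqrt{2}\,(\delta+\sqrt{\varepsilon})$ and $\lambda|f_{\lambda}^{\delta,\varepsilon}|_{\F_{k+1}(\Omega)}^2\le 2\lambda|f|_{\F_{k+1}(\Omega)}^2$, whence $|f_{\lambda}^{\delta,\varepsilon}|_{\F_{k+1}(\Omega)}\le\sqrt{2}\,|f|_{\F_{k+1}(\Omega)}$. A triangle inequality with the noise bound then yields $\|f-f_{\lambda}^{\delta,\varepsilon}\|_{L^2(\Omega)}\le(1+\sqrt{2})(\delta+\sqrt{\varepsilon})=\mathcal{O}(\delta+\sqrt{\varepsilon})$, while $|f-f_{\lambda}^{\delta,\varepsilon}|_{\F_{k+1}(\Omega)}\le(1+\sqrt{2})|f|_{\F_{k+1}(\Omega)}$ stays bounded by a constant depending only on $|f|_{\F_{k+1}(\Omega)}$. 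Next I would feed $g=f-f_{\lambda}^{\delta,\varepsilon}\in\F_{k+1}(\Omega)\cap L^2(\Omega)$ into the embedding of item~\ref{enm:sobolev_RBV} of Theorem~\ref{thm:sobolev_3spaces} at the top level $m=k$; since both $\|f-f_{\lambda}^{\delta,\varepsilon}\|_{L^2(\Omega)}$ and $|f-f_{\lambda}^{\delta,\varepsilon}|_{\F_{k+1}(\Omega)}$ are $\mathcal{O}(1)$ in the dimensional parameter, this controls $\|f-f_{\lambda}^{\delta,\varepsilon}\|_{H^{k}(\Omega)}$ by a constant multiple of $\tilde{c}(d,k,k)\frac{1}{k!}\bigl(1+2^{k}M(d)d^{k/2}+k!M(d)\bigr)$.

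Finally, I would apply the interpolation inequality~(\ref{eq:interpolation}) with exponents $\frac{m}{k}$ and $\frac{k-m}{k}$ to $f-f_{\lambda}^{\delta,\varepsilon}$, combining the just-obtained $H^k$ bound with the $L^2$ bound from the previous step. Collecting the dimensional factors---$K(d,k,m)\sim d^{c(k,m)}$ from~(\ref{eq:K_dkm}), the power $\bigl(\tilde{c}(d,k,k)\bigr)^{m/k}\sim d^{m/2}$ since $\tilde{c}(d,k,k)\sim d^{k/2}$, and the factor $\bigl(1+2^{k}M(d)d^{k/2}+k!M(d)\bigr)^{m/k}$---reproduces the claimed rate $\mathcal{O}\bigl(d^{\frac{m}{2}+c(k,m)}(1+2^{k}d^{\frac{k}{2}}M(d)+k!M(d))^{\frac{m}{k}}(\delta+\sqrt{\varepsilon})^{\frac{k-m}{k}}\bigr)$, with implied constant depending only on $k$, $m$, $\|f\|_{L^2(\Omega)}$ and $|f|_{\F_{k+1}(\Omega)}$. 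The main conceptual hurdle is not the chain of estimates but the absence of an existence theorem for a genuine minimizer of $L(\cdot)$---the reflexivity of $\F_{k+1}(\Omega)$ being open, as noted before the statement---which is exactly why the argument must be routed through the $\varepsilon$-approximate optimality~(\ref{eq:Tik_RadonBV}) and why $f$ itself, rather than a finite-width surrogate, serves as the comparison function.
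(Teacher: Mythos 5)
Your proposal is correct and follows essentially the same route as the paper: insert $g=f$ into the $\varepsilon$-approximate optimality condition, read off the $L^2$ residual and the Radon-BV seminorm bound, invoke item~\ref{enm:sobolev_RBV} of Theorem~\ref{thm:sobolev_3spaces} for the $H^k$ control, and interpolate. The only cosmetic difference is that you apply the embedding to the difference $f-f_{\lambda}^{\delta,\varepsilon}$ directly, whereas the paper applies it to $f$ and $f_{\lambda}^{\delta,\varepsilon}$ separately and then uses the triangle inequality in $H^k(\Omega)$; both yield the same asymptotic bound.
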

\begin{proof} By the minimizing property, we have
    \begin{equation*}
        \begin{split}
            L(f_{\lambda}^{\delta,\varepsilon})\le & L(f)+ \varepsilon
                \le \delta^2 + \lambda |f|_{\F_{k+1}(\Omega)}^2 +  \varepsilon.
        \end{split}
    \end{equation*}
    We thus obtain the bounds
    $$
    \|f^{\delta}-f_{\lambda}^{\delta,\varepsilon}\|_{L^2(\Omega)} \le \delta+ \sqrt{\lambda} |f|_{\F_{k+1}(\Omega)} + \sqrt{ \varepsilon},
    $$
    and
    \begin{equation*}
        \begin{split}
            \|f_{\lambda}^{\delta,\varepsilon}\|_{L^2(\Omega)}
            & \le \|f_{\lambda}^{\delta,\varepsilon}-f^{\delta}\|_{L^{2}(\Omega)}+\|f^{\delta}-f\|_{L^{2}(\Omega)}+\|f\|_{L^{2}(\Omega)}\\
            & \le 2\delta + \sqrt{\lambda}|f|_{\F_{k+1}(\Omega)} + \sqrt{ \varepsilon} + \|f\|_{L^{2}(\Omega)}\\
            & = 3\delta + 2\sqrt{ \varepsilon} + \|f\|_{L^{2}(\Omega)},
        \end{split}
    \end{equation*}
    for $\sqrt{\lambda}|f|_{\F_{k+1}} = \delta+\sqrt{ \varepsilon}$.
    At the same time, we also derive
    $$
    |f^{\delta,\varepsilon}_{\lambda}|_{\F_{k+1}(\Omega)} \le \lambda^{-\frac{1}{2}}\left(\delta + \sqrt{\lambda}|f|_{\F_{k+1}(\Omega)} + \sqrt{\varepsilon} \right) = 2|f|_{\F_{k+1}}.
    $$
    Combining the above inequality, we further obtain
    \begin{align}
        & \|f-f_{\lambda}^{\delta,\varepsilon}\|_{L^{2}(\Omega)}\le 2\delta + \sqrt{\lambda} |f|_{\F_{k+1}(\Omega)} + \sqrt{ \varepsilon}=3\delta + 2\sqrt{ \varepsilon}\le 3(\delta +\sqrt{\varepsilon}), \label{eq:L2_error_RBV} \\
        & \|f_{\lambda}^{\delta,\varepsilon}\|_{L^{2}(\Omega)}+|f_{\lambda}^{\delta,\varepsilon}|_{\F_{k+1}(\Omega)} \le 3\delta + 2\sqrt{ \varepsilon}+\|f\|_{L^{2}(\Omega)}+2|f|_{\F_{k+1}(\Omega)}. \label{eq:L2_error_RBV2}
    \end{align}
    Then according to \ref{enm:sobolev_RBV} in Theorem \ref{thm:sobolev_3spaces} and the above (\ref{eq:L2_error_RBV2}), there holds
    \begin{equation}\label{eq:Hk_error_RBV}
        \begin{split}
            \|f_{\lambda}^{\delta,\varepsilon}-f\|_{H^{k}(\Omega)}\le  &\|f_{\lambda}^{\delta,\varepsilon}\|_{H^k(\Omega)} + \|f\|_{H^{k}(\Omega)}\\
            \le & 3\frac{\tilde{c}}{k!}\left(1+2^{k}d^{\frac{k}{2}}M(d)+k!M(d)\right)\\
            &\times \left(\delta+\sqrt{\varepsilon}+\left\|f\right\|_{L^{2}(\Omega)}+\left|f\right|_{\mathcal{F}_{k+1}(\Omega)}\right),\\
        \end{split}
    \end{equation}
    where the $\tilde{c}:= \tilde{c}(d,k,k)$ is the same as in Lemma \ref{lem:approx_var_L2} and the $M(d)$ is the same as in \eqref{eq:var_RBV}. Finally, the interpolation inequality \eqref{eq:interpolation}, (\ref{eq:L2_error_RBV}), and (\ref{eq:Hk_error_RBV}) yield
    \begin{equation*}
        \begin{split}
            \|f-f_{\lambda}^{\delta,\varepsilon}\|_{H^{m}(\Omega)}
            & \le K(d,k,m)\|f-f_{\lambda}^{\delta,\varepsilon}\|_{H^{k}(\Omega)}^{\frac{m}{k}}\|f-f_{\lambda}^{\delta,\varepsilon}\|_{L^{2}(\Omega)}^{\frac{k-m}{k}}\\
            & \le 3\cdot 2^{\frac{m}{k}}K(d,k,m)\left(\frac{\tilde{c}}{k!}\right)^{\frac{m}{k}}\left(1+2^{k}d^{\frac{k}{2}}M(d)+k!M(d)\right)^{\frac{m}{k}}\\
            &\quad \times \left(\delta + \sqrt{ \varepsilon}\right)^{\frac{k-m}{k}} \left[\left(\delta+\sqrt{\varepsilon}\right)^{\frac{m}{k}} + \left(\left\|f\right\|_{L^{2}(\Omega)} + \left|f\right|_{\mathcal{F}_{k+1}(\Omega)}\right)^{\frac{m}{k}}\right].
        \end{split}
    \end{equation*}
    Noticing that $\tilde{c}\sim d^{\frac{k}{2}}$ when $d\to \infty$ and \eqref{eq:K_dkm}, we end the proof.
\end{proof}

In the above theorem, we keep the factor $(1+2^{k}M(d)+k!M(d))^{\frac{m}{k}}$ for the sake of completeness. According to the lower bound of $M(d)$ provided in Lemma \ref{lem:M_lowerbound}, this factor exceeds a positive constant as $d$ approaches infinity. However, the upper bound of $M(d)$ remains uncertain.

\subsection{Discussion of the above error bounds}
As Theorems \ref{thm:Barron_regularization}-\ref{thm:RadonBV_regularization} demonstrate, all three (semi)norm penalty terms exhibit regularization properties when approximating a function and its derivatives. Their distinct characteristics can be summarized as follows:
The extended Barron norm penalty term includes both the weights and biases of the hidden layer as well as the weights of the output layer. Conversely, the variation norm penalty term contains the weights of the output layer, and the control over the weights and biases of the hidden layer is conducted during the training of the dictionary $\PP_k$. These two types of norms achieve the properties of regularization solutions under the $H^{k}(\Omega)$ norm by controlling these weights and biases. For the most special Radon-BV seminorm penalty term, since low-order polynomials belong to the null space of the operator $\RBV_{k+1}$, we need to additionally analyze the $L^2(\Omega)$ norm of the regularized solution to assist in controlling its $H^{k}(\Omega)$ norm. Fortunately, we achieve the goal by controlling the residual term in the Tikhonov functional (\ref{eq:Tik_RadonBV}).

We make some further comments concerning these error bounds in Theorems \ref{thm:Barron_regularization}-\ref{thm:RadonBV_regularization}.
\begin{enumerate}
  \item 
  It is noteworthy that all regularization minimizers presented in Theorems \ref{thm:Barron_regularization}-\ref{thm:RadonBV_regularization} are capable of approximating the function and its derivatives simultaneously. This observation offers significant computational efficiency when designing a neural network with diverse approximating tasks, as it allows for a unified approach that handles multiple objectives concurrently.
  \item To implement Tikhonov regularization with the extended Barron norm penalty or the variation norm penalty, it is necessary to predetermine the number of neurons, $n$, in order to explicitly compute the penalty value during the optimization of regularization functionals. This specific value of $n$ subsequently appears in the error bounds of Theorems \ref{thm:Barron_regularization} and \ref{thm:vs_regularization}, allowing for the adjustment of $n$ to balance the total error with respect to the noise level $\delta$. Contrastingly, the Radon-BV seminorm penalty term does not require prior knowledge of the neuron number $n$. In principle, any two-layer neural network with an arbitrary number of neurons can belong to the Radon-BV space. Therefore, the neuron number $n$ can be arbitrarily large, resulting in an error bound in Theorem \ref{thm:RadonBV_regularization} that is independent of the value of $n$.
  \item
  While the other two (semi)norms undoubtedly possess their own merits, such as requiring fewer network parameters or allowing for a wider search space with arbitrary widths, the extended Barron norm stands out for its insensitivity to the dimensionality of the function approximation problem with a finite number of neurons. 
  \item When considering derivative approximation, all approaches typically result in error bounds that are of H\"older-type dependency on the dimensionality index $d$, which means there is no curse of dimensionality in this particular inverse problem of numerical differentiation.
      However, given the moderate ill-posedness of numerical differentiation, this observation prompts an interesting inquiry: Is the curse of dimensionality a consequence when tackling other severely ill-posed inverse problems? 
      While a recent study in \cite[Sec. 5]{MH24} sheds light on this matter, it primarily only focuses on a moderately ill-posed problem. Further investigations into severely ill-posed problems are warranted to gain a more comprehensive understanding.
\end{enumerate}

\section{Conclusion}
In this paper, we undertake a comprehensive examination of three prevalent (semi)norms encountered in shallow neural networks, delving into their inherent relationships. Specifically, we investigate the linkage between these norms and the standard Sobolev norms, while focusing on the dimensionality index of the domain under consideration. A generic Tikhonov regularization scheme is introduced for approximating functions and their derivatives. A rigorous error-bound analysis in Sobolev norms is conducted, taking into account the influence of the dimensionality index. 

For future work, several avenues of investigation present themselves. Firstly, a comprehensive numerical comparison of various regularization schemes should be conducted to validate the differences among them. 
Secondly, our current research has focused primarily on shallow neural networks; thus, extending these findings to deep neural networks represents an imporant next step. Finally, verifying the curse of dimensionality and investigating the potential application of these concepts to a wider array of inverse problems holds significant interest, as it could potentially find new pathways for insights and solutions within this field.


\bibliographystyle{siamplain}
\bibliography{Refs_FunDeApp}

\begin{thebibliography}{10}

\bibitem{abdeljawad_integral_2022}
{\sc A.~Abdeljawad and P.~Grohs}, {\em Integral representations of shallow neural network with rectified power unit activation function}, Neural Networks, 155 (2022), pp.~536--550, \url{https://doi.org/10.1016/j.neunet.2022.09.005}, \url{https://linkinghub.elsevier.com/retrieve/pii/S0893608022003392} (accessed 2024-05-08).

\bibitem{Adams_book}
{\sc R.~A. Adams and J.~J.~F. Fournier}, {\em Sobolev spaces}, no.~v. 140 in Pure and applied mathematics, Academic Press, 2nd ed~ed., 2003.

\bibitem{Ali_Nouy_21}
{\sc M.~Ali and A.~Nouy}, {\em Approximation of smoothness classes by deep rectifier networks}, {SIAM} Journal on Numerical Analysis, 59 (2021), pp.~3032--3051, \url{https://doi.org/10.1137/20M1360657}, \url{https://epubs.siam.org/doi/10.1137/20M1360657} (accessed 2024-06-05).

\bibitem{Aronszajn50}
{\sc N.~Aronszajn}, {\em Theory of reproducing kernels}, Transactions of the American Mathematical Society, 68 (1950), pp.~337--404, \url{https://doi.org/10.1090/S0002-9947-1950-0051437-7}, \url{https://www.ams.org/tran/1950-068-03/S0002-9947-1950-0051437-7/} (accessed 2024-05-08).

\bibitem{Baoetal20}
{\sc G.~Bao, X.~Ye, Y.~Zang, and H.~Zhou}, {\em Numerical solution of inverse problems by weak adversarial networks}, Inverse Problems, 36 (2020), p.~115003, \url{https://doi.org/10.1088/1361-6420/abb447}, \url{https://iopscience.iop.org/article/10.1088/1361-6420/abb447} (accessed 2024-05-08).

\bibitem{Barron93}
{\sc A.~Barron}, {\em Universal approximation bounds for superpositions of a sigmoidal function}, {IEEE} Transactions on Information Theory, 39 (1993), pp.~930--945, \url{https://doi.org/10.1109/18.256500}, \url{https://ieeexplore.ieee.org/document/256500/} (accessed 2024-05-08).

\bibitem{Bishop95}
{\sc C.~M. Bishop}, {\em Training with noise is equivalent to tikhonov regularization}, Neural Computation, 7 (1995), pp.~108--116, \url{https://doi.org/10.1162/neco.1995.7.1.108}, \url{https://direct.mit.edu/neco/article/7/1/108-116/5828} (accessed 2024-05-08).

\bibitem{BN03}
{\sc M.~Burger and A.~Neubauer}, {\em Analysis of tikhonov regularization for function approximation by neural networks}, Neural Networks, 16 (2003), pp.~79--90, \url{https://doi.org/10.1016/S0893-6080(02)00167-3}, \url{https://linkinghub.elsevier.com/retrieve/pii/S0893608002001673} (accessed 2024-05-08).

\bibitem{CPF}
{\sc A.~Caragea, P.~Petersen, and F.~Voigtlaender}, {\em Neural network approximation and estimation of classifiers with classification boundary in a barron class},  (2020), \url{https://doi.org/10.48550/ARXIV.2011.09363}, \url{https://arxiv.org/abs/2011.09363} (accessed 2024-05-08).
\newblock Publisher: [object Object] Version Number: 2.

\bibitem{Clason2020}
{\sc C.~Clason}, {\em Introduction to Functional Analysis}, Compact Textbooks in Mathematics, Springer International Publishing, 2020, \url{https://doi.org/10.1007/978-3-030-52784-6}, \url{http://link.springer.com/10.1007/978-3-030-52784-6} (accessed 2024-05-19).

\bibitem{Cybenko89}
{\sc G.~Cybenko}, {\em Approximation by superpositions of a sigmoidal function}, Mathematics of Control, Signals, and Systems, 2 (1989), pp.~303--314, \url{https://doi.org/10.1007/BF02551274}, \url{http://link.springer.com/10.1007/BF02551274} (accessed 2024-05-08).

\bibitem{DeVore1998}
{\sc R.~A. {DeVore}}, {\em Nonlinear approximation}, Acta Numerica, 7 (1998), pp.~51--150, \url{https://doi.org/10.1017/S0962492900002816}, \url{https://www.cambridge.org/core/product/identifier/S0962492900002816/type/journal_article} (accessed 2024-05-08).

\bibitem{DSY_21}
{\sc B.~Dong, Z.~Shen, and J.~Yang}, {\em Approximation from noisy data}, {SIAM} Journal on Numerical Analysis, 59 (2021), pp.~2722--2745, \url{https://doi.org/10.1137/20M1389091}, \url{https://epubs.siam.org/doi/10.1137/20M1389091} (accessed 2024-06-05).

\bibitem{EMW22}
{\sc W.~E, C.~Ma, and L.~Wu}, {\em The barron space and the flow-induced function spaces for neural network models}, Constructive Approximation, 55 (2022), pp.~369--406, \url{https://doi.org/10.1007/s00365-021-09549-y}, \url{https://link.springer.com/10.1007/s00365-021-09549-y} (accessed 2024-05-08).

\bibitem{EHN96}
{\sc H.~W. Engl, M.~Hanke-Bourgeois, and A.~Neubauer}, {\em Regularization of inverse problems}, no.~375 in Mathematics and its applications {\textless}Dordrecht{\textgreater}, Kluwer Acad. Publ, 2000.

\bibitem{GKNV2022}
{\sc R.~Gribonval, G.~Kutyniok, M.~Nielsen, and F.~Voigtlaender}, {\em Approximation spaces of deep neural networks}, Constructive Approximation, 55 (2022), pp.~259--367, \url{https://doi.org/10.1007/s00365-021-09543-4}, \url{https://link.springer.com/10.1007/s00365-021-09543-4} (accessed 2024-05-08).

\bibitem{HS01}
{\sc M.~Hanke and O.~Scherzer}, {\em Inverse problems light: Numerical differentiation}, The American Mathematical Monthly, 108 (2001), pp.~512--521, \url{https://doi.org/10.1080/00029890.2001.11919778}, \url{https://www.tandfonline.com/doi/full/10.1080/00029890.2001.11919778} (accessed 2024-05-08).

\bibitem{Hornik91}
{\sc K.~Hornik}, {\em Approximation capabilities of multilayer feedforward networks}, Neural Networks, 4 (1991), pp.~251--257, \url{https://doi.org/10.1016/0893-6080(91)90009-T}, \url{https://linkinghub.elsevier.com/retrieve/pii/089360809190009T} (accessed 2024-05-08).

\bibitem{JMFU17}
{\sc K.~H. Jin, M.~T. {McCann}, E.~Froustey, and M.~Unser}, {\em Deep convolutional neural network for inverse problems in imaging}, {IEEE} Transactions on Image Processing, 26 (2017), pp.~4509--4522, \url{https://doi.org/10.1109/TIP.2017.2713099}, \url{http://ieeexplore.ieee.org/document/7949028/} (accessed 2024-05-10).

\bibitem{LLMP23}
{\sc Y.~Li, S.~Lu, P.~Mathé, and S.~V. Pereverzev}, {\em Two-layer networks with the \$\${\textbackslash}text \{{ReLU}\}{\textasciicircum}k\$\$ activation function: Barron spaces and derivative approximation}, Numerische Mathematik, 156 (2024), pp.~319--344, \url{https://doi.org/10.1007/s00211-023-01384-6}, \url{https://link.springer.com/10.1007/s00211-023-01384-6} (accessed 2024-05-08).

\bibitem{LSYZ21}
{\sc J.~Lu, Z.~Shen, H.~Yang, and S.~Zhang}, {\em Deep network approximation for smooth functions}, {SIAM} Journal on Mathematical Analysis, 53 (2021), pp.~5465--5506, \url{https://doi.org/10.1137/20M134695X}, \url{https://epubs.siam.org/doi/10.1137/20M134695X} (accessed 2024-06-16).

\bibitem{LP06}
{\sc S.~Lu and S.~Pereverzev}, {\em Numerical differentiation from a viewpoint of regularization theory}, Mathematics of Computation, 75 (2006), pp.~1853--1870, \url{https://doi.org/10.1090/S0025-5718-06-01857-6}, \url{https://www.ams.org/mcom/2006-75-256/S0025-5718-06-01857-6/} (accessed 2024-05-08).

\bibitem{LP13}
{\sc S.~Lu and S.~V. Pereverzev}, {\em Regularization Theory for Ill-posed Problems: Selected Topics}, {DE} {GRUYTER}, 2013, \url{https://doi.org/10.1515/9783110286496}, \url{https://www.degruyter.com/document/doi/10.1515/9783110286496/html} (accessed 2024-05-08).

\bibitem{MH24}
{\sc P.~Mathé and B.~Hofmann}, {\em Tractability of linear ill-posed problems in hilbert space}, Journal of Complexity, 84 (2024), p.~101867, \url{https://doi.org/10.1016/j.jco.2024.101867}, \url{https://www.sciencedirect.com/science/article/pii/S0885064X2400044X} (accessed 2024-11-20).

\bibitem{MM22}
{\sc Y.~Meng and P.~Ming}, {\em A new function space from barron class and application to neural network approximation}, Commun. Comput. Phys.,  (2022).

\bibitem{OSWS20}
{\sc G.~Ongie, R.~Willett, D.~Soudry, and N.~Srebro}, {\em A function space view of bounded norm infinite width {ReLU} nets: The multivariate case}, 2020, \url{https://iclr.cc/virtual_2020/poster_H1lNPxHKDH.html} (accessed 2024-05-08).

\bibitem{PN21}
{\sc R.~Parhi and R.~D. Nowak}, {\em Banach space representer theorems for neural networks and ridge splines}, Journal of Machine Learning Research, 22 (2021), pp.~1--40, \url{http://jmlr.org/papers/v22/20-583.html} (accessed 2024-05-08).

\bibitem{PN23}
{\sc R.~Parhi and R.~D. Nowak}, {\em Near-minimax optimal estimation with shallow {ReLU} neural networks}, {IEEE} Transactions on Information Theory, 69 (2023), pp.~1125--1140, \url{https://doi.org/10.1109/TIT.2022.3208653}, \url{https://ieeexplore.ieee.org/document/9899453/} (accessed 2024-05-08).

\bibitem{SESS19}
{\sc P.~Savarese, I.~Evron, D.~Soudry, and N.~Srebro}, {\em How do infinite width bounded norm networks look in function space?}, in Proceedings of the Thirty-Second Conference on Learning Theory, {PMLR}, 2019, pp.~2667--2690, \url{https://proceedings.mlr.press/v99/savarese19a.html} (accessed 2024-05-08).
\newblock {ISSN}: 2640-3498.

\bibitem{SX22}
{\sc J.~W. Siegel and J.~Xu}, {\em High-order approximation rates for shallow neural networks with cosine and {ReLU} activation functions}, Applied and Computational Harmonic Analysis, 58 (2022), pp.~1--26, \url{https://doi.org/10.1016/j.acha.2021.12.005}, \url{https://linkinghub.elsevier.com/retrieve/pii/S1063520321001056} (accessed 2024-05-08).

\bibitem{SX23}
{\sc J.~W. Siegel and J.~Xu}, {\em Characterization of the variation spaces corresponding to shallow neural networks}, Constructive Approximation, 57 (2023), pp.~1109--1132, \url{https://doi.org/10.1007/s00365-023-09626-4}, \url{https://link.springer.com/10.1007/s00365-023-09626-4} (accessed 2024-05-08).

\bibitem{SX22sharp}
{\sc J.~W. Siegel and J.~Xu}, {\em Sharp bounds on the approximation rates, metric entropy, and n-widths of shallow neural networks}, Foundations of Computational Mathematics, 24 (2024), pp.~481--537, \url{https://doi.org/10.1007/s10208-022-09595-3}, \url{https://link.springer.com/10.1007/s10208-022-09595-3} (accessed 2024-05-08).

\bibitem{Wahba90}
{\sc G.~Wahba}, {\em Spline Models for Observational Data}, Society for Industrial and Applied Mathematics, 1990, \url{https://doi.org/10.1137/1.9781611970128}, \url{http://epubs.siam.org/doi/book/10.1137/1.9781611970128} (accessed 2024-05-08).

\bibitem{Xu20}
{\sc J.~Xu}, {\em Finite neuron method and convergence analysis}, Communications in Computational Physics, 28 (2020), pp.~1707--1745, \url{https://doi.org/10.4208/cicp.OA-2020-0191}, \url{http://global-sci.org/intro/article_detail/cicp/18394.html} (accessed 2024-05-08).

\bibitem{Yarotsky2017}
{\sc D.~Yarotsky}, {\em Error bounds for approximations with deep {ReLU} networks}, Neural Networks, 94 (2017), pp.~103--114, \url{https://doi.org/10.1016/j.neunet.2017.07.002}, \url{https://linkinghub.elsevier.com/retrieve/pii/S0893608017301545} (accessed 2024-05-08).

\bibitem{zhou_universality_2020}
{\sc D.-X. Zhou}, {\em Universality of deep convolutional neural networks}, Applied and Computational Harmonic Analysis, 48 (2020), pp.~787--794, \url{https://doi.org/10.1016/j.acha.2019.06.004}, \url{https://linkinghub.elsevier.com/retrieve/pii/S1063520318302045} (accessed 2024-05-08).

\end{thebibliography}

\end{document}